\newtheorem{thm}{Theorem}[section]
\newtheorem{prop}[thm]{Proposition}
\newtheorem{cor}[thm]{Corollary}
\numberwithin{equation}{section}
\theoremstyle{definition}
\newtheorem{definition}[thm]{Definition}
\newtheorem{remark}[thm]{Remark}
\newcommand{\cB}{{\mathcal B}}
\newcommand{\cL}{{\mathcal L}}
\newcommand{\cE}{{\mathcal E}}
\newcommand{\cO}{{\mathcal O}}
\newcommand{\cM}{{\mathcal M}}
\newcommand{\cN}{{\mathcal N}}
\newcommand{\cK}{{\mathcal K}}
\newcommand{\bbQ}{{\mathbb Q}}
\newcommand{\bbZ}{{\mathbb Z}}
\newcommand{\bR}{{\mathbb R}}
\newcommand{\Ker}{\operatorname{Ker}}
\newcommand{\bbP}{\mathbb P}
\newcommand{\SHom}{{\mathcal{H}om\,}}
\newcommand{\Proj}{\operatorname{Proj}}
\newcommand{\punto}{{\displaystyle \cdot}}
\newcommand{\Kos}{\operatorname{Kos}}
\newcommand{\DeRham}{\operatorname{DeRham}}
\newcommand{\di}{\operatorname{d}}
\newcommand{\Spec}{\operatorname{Spec}}
\newcommand{\limi}{\operatorname{lim}}
\newcommand{\plim}[1]{\,\underset{#1}{\underset{\leftarrow}{\limi}}\,}
\newcommand{\wt}{\widetilde}
\newcommand{\marginnote}[1]{\ifthenelse{\isodd{\thepage}}{\normalmarginpar}
{\reversemarginpar}\marginpar{\fbox{\parbox{18mm}{\sloppy\footnotesize #1}}}}
\begin{document}
\title[Euler Sequence and Koszul complex of a module]{Euler Sequence and Koszul complex of a module}

\date{\today}
\author[B. Andreas]{Bj\"orn Andreas}
\author[D. S\'anchez G\'omez]{Dar\'io S\'anchez G\'omez}
\author[F. Sancho de Salas]{Fernando Sancho de Salas}
\address{Mathematisches Institut,
Freie Universit\"at Berlin, Arnimallee 4, Berlin  Germany}
\email{andreasb@math.fu-berlin.de}
\address{Departamento de Matem\'aticas and Instituto Universitario de F\'{\i}sica Fundamental y Matem\'aticas
(IUFFyM), Universidad de Salamanca, Plaza de la Merced 1-4, 37008
Salamanca, Spain.}
\email{dario@usal.es}\email{fsancho@usal.es}

\begin{abstract} We construct relative and global Euler sequences of a module. We apply it to prove some acyclicity results of the Koszul complex of a module and to compute the cohomology of the sheaves of (relative and absolute) differential $p$-forms of a projective bundle. In particular we generalize Bott's formula for the projective space to a projective bundle over a scheme of characteristic zero.
\end{abstract}

\thanks{This work was supported by the SFB 647 `Space-Time-Matter:Arithmetic and Geometric Structures' of the DFG (German Research Foundation) and by the Spanish grants MTM2013-45935-P (MINECO) and FS/12-2014 (Samuel Sol\'orzano Barruso Foundation).}

\vskip.5cm

\maketitle

\section*{Introduction}

This paper deals with two related questions: the acyclicity of the Koszul complex of a module and the cohomology of the sheaves of (relative and absolute) differential $p$-forms of a projective bundle over a scheme.

Let $M$ be a module over a commutative ring $A$. One has the Koszul complex $\Kos (M)=\Lambda^\punto M \otimes_A S^\punto M$, where $\Lambda^\punto M$ and $S^\punto M$ stand for the exterior and symmetric algebras of $M$. It is a graded complex $\Kos(M)=\bigoplus_{n\geq 0}\Kos(M)_n$, whose $n$-th graded component $\Kos(M)_n$ is the complex: \[ 0\xrightarrow{} \Lambda^nM\xrightarrow{}\Lambda^{n-1}M\otimes M \xrightarrow{} \Lambda^{n-2}M\otimes S^2 M \xrightarrow{}\cdots \xrightarrow{} M\otimes S^{n-1}M\xrightarrow{} S^nM \xrightarrow{}0\] It has been known for many years that $\Kos(M)_n$ is acyclic for $n>0$, provided that $M$ is a flat $A$-module or $n$ is invertible in $A$ (see \cite{Bourbaki} or \cite{Quillen67}). It was conjectured in \cite{SS00} that $\Kos (M)$ is always acyclic. A counterexample in characteristic 2 was given in \cite{GPV07}, but it is also proved there that $H_\mu(\Kos(M)_\mu)=0$ for any $M$, where $\mu$ is the minimal number of generators of $M$. Leaving aside the case of characteristic 2 (whose pathology is clear for the exterior algebra), we prove two new evidences for the validity of the conjecture (for $A$  noetherian): firstly, we prove (Theorem \ref{thm:Koszulacyclic-n>>0}) that, for any finitely generated $M$, $\Kos(M)_n$ is acyclic for $n>>0$; secondly, we prove (Theorem \ref{thm:KoszulacyclicIdeal}) that if $I$ is an ideal locally generated by a regular sequence, then  $\Kos(I)_n$ is acyclic for any $n>0$. These two results are a consequence of relating the Koszul complex $\Kos(M)$ with the geometry of the space $\bbP=\Proj S^\punto M$, as follows:

First of all, we shall reformulate the Koszul complex in terms of differential forms of $S^\punto M$ over $A$: the canonical isomorphism $\Omega_{S^\punto M/A}=M\otimes_A S^\punto M$ allows us to interpret the Koszul complex $\Kos(M)$ as the complex of differential forms $\Omega^\punto_{S^\punto M/A}$ whose differential, $i_D\colon \Omega^p_{S^\punto M/A}\to \Omega^{p-1}_{S^\punto M/A}$,  is the inner product with the $A$-derivation $D\colon S^\punto M\to S^\punto M$ consisting in multiplication by $n$ on $S^nM$. By homogeneous  localization, one obtains a complex of $\cO_\bbP$-modules $\wt\Kos(M)$ on $\bbP$. Our first result (Theorem \ref{thm:Kostildeacyclic}) is that the complex $\wt\Kos(M)$ is acyclic with factors (cycles or boundaries)  the sheaves $\Omega^p_{\bbP/A}$.  Moreover,  one has a natural morphism
\[ \Kos(M)_n\to \pi_*[\wt\Kos(M)\otimes \cO_{\bbP} (n)] \] with $\pi\colon \bbP\to \Spec A$ the canonical morphism. In Theorem \ref{thm:n-Koszulacyclic} we give (cohomological) sufficient conditions for the acyclicity of the complexes $\Kos(M)_n$ and $\pi_*[\wt\Kos(M)\otimes \cO_{\bbP} (n)]$. These conditions, under noetherian hypothesis, are satisfied for $n>>0$, thus obtaining Theorem \ref{thm:Koszulacyclic-n>>0}. The acyclicity of the Koszul complex of a locally regular ideal follows then from Theorem \ref{thm:n-Koszulacyclic} and the theorem of formal functions.

The advantage of expressing the Koszul complex $\Kos(M)$ as $(\Omega^\punto_{S^\punto M/A},i_D)$ is two-fold. Firstly, it makes clear its relationship with the De Rham complex $(\Omega^\punto_{S^\punto M/A},\di)$: The Koszul and De Rham differentials are related by Cartan's formula: $i_D\circ \di +\di\circ i_D=$ multiplication by $n$ on $\Kos(M)_n$. This yields a splitting result (Proposition \ref{prop:RelativeHomotopTrivial} or Corollary \ref{cor:relativesplitting}) which will be essential for some cohomological results in section \ref{sec:relativeBott} as we shall explain later on. Secondly, it allows a natural generalization (which is the subject of section \ref{sec:global}): If $A$ is a $k$-algebra, we define the complex $\Kos(M/k)$ as the complex of differential forms (over $k$), $\Omega^\punto_{S^\punto M/k}$ whose differential is the inner product with the same $D$ as before. Again, one has that $\Kos(M/k)=\bigoplus_{n\geq 0}\Kos(M/k)_n$ and it induces, by homogeneous localization, a complex $\wt\Kos(M/k)$ of modules on $\bbP$ which is also acyclic and whose factors are the sheaves $\Omega^p_{\bbP/k}$ (Theorem \ref{thm:globalKostildeacyclic}). We can reproduce the  aforementioned results about the complexes $\Kos(M)_n$, $\wt\Kos(M)$,   for the complexes $\Kos(M/k)_n$, $\wt\Kos(M/k)$.

Section \ref{sec:relativeBott} deals with the second subject of the paper: let $\cE$ be a locally free module of rank $r+1$ on a $k$-scheme $X$ and let $\pi\colon \bbP\to X$ be the associated projective bundle, i.e., $\bbP=\Proj S^\punto \cE$. There are well known results about the (global and relative) cohomology of the sheaves $\Omega^p_{\bbP/X}(n)$ and $\Omega^p_{\bbP/k}(n)$ (we are using the standard abbreviated notation $\cN(n)=\cN\otimes\cO_{\bbP}(n)$) due to Deligne, Verdier and Berthelot-Illusie (\cite{De73-SGA7}, \cite{Ver74},\cite{BI70}) and about the cohomology of the sheaves $\Omega^p_{\bbP_r}(n)$ of the ordinary projective space due to Bott (the so called Bott's formula, \cite{Bott57}). We shall not use their results; instead, we reprove them and we obtain some new results, overall when $X$ is a $\bbQ$-scheme. Let us be more precise:

In Theorem \ref{prop:VerdierRelative} we compute the relative cohomology sheaves $R^i\pi_*\Omega^p_{\bbP/X}(n)$, obtaining Deligne's result (see \cite{De73-SGA7} and also \cite{Ver74}) and  a new (splitting) result, in the case of a $\bbQ$-scheme, concerning the sheaves $\pi_*\Omega^p_{\bbP/X}(n)$ and $R^r\pi_*\Omega^p_{\bbP/X}(-n)$ for $n>0$. We obtain Bott formula for the projective space as a consequence. In Theorem \ref{prop:VerdierAbsolute} we compute the relative cohomology sheaves $R^i\pi_*\Omega^p_{\bbP/k}(n)$, obtaining Verdier's results (see \cite{Ver74}) and improving them in two ways: first, we give a more explicit description of $\pi_*\Omega^p_{\bbP/k}(n)$ and of $R^r\pi_*\Omega^p_{\bbP/k}(-n)$ for $n>0$; secondly, we obtain a splitting result for these sheaves when $X$ is a $\bbQ$-scheme (as in the relative case).

Regarding Bott's formula, we are able to generalize it for a projective bundle, computing the dimension of the cohomology vector spaces $H^q(\bbP,\Omega^p_{\bbP/X}(n))$ and $H^q(\bbP,\Omega^p_{\bbP/k}(n))$ when $X$ is a proper $k$-scheme of characteristic zero (Corollaries \ref{cor:relativebott} and \ref{cor:absolutebott}).

It should be mentioned that these results make use of the complexes $\wt\Kos(\cE)$ (as Deligne and Verdier) and $\wt\Kos(\cE/k)$. The complex $\wt\Kos(\cE)$ is essentially equivalent to the exact sequence
\[ 0\to \Omega_{\bbP/X}\to (\pi^*\cE)\otimes \cO_{\bbP}(-1) \to\cO_{\bbP}\to 0\] which is usually called Euler sequence. The complex $\wt\Kos(\cE/k)$ is equivalent to the exact sequence
\[ 0\to\Omega_{\bbP/k}\to\wt\Omega_{B/k}\to\cO_{\bbP}\to 0\] with $B=S^\punto\cE$, which we have called global Euler sequence.  These sequences still hold for any $A$-module $M$ (which we have called relative and global Euler sequences of $M$).
The aforementioned results about the acyclicity of the Koszul complexes of a module obtained in sections \ref{sec:relative} and \ref{sec:global} are a consequence of this fact.

\section{Relative Euler sequence of a module and Koszul complexes}\label{sec:relative}
Let $(X,\cO)$ be a scheme and let $\cM$ be   quasi-coherent $\cO$-module. Let  $\cB=S^\punto_{} \cM$ be the symmetric algebra of $\cM$ (over $\cO$), which is a graded $\cO$-algebra: the homogeneous component of degree $n$ is $\cB_n=S^n_{} \cM$. The module $\Omega_{\cB/\cO}$ of Kh\"aler differentials is a graded $\cB$-module in a natural way: $\cB\otimes_{\cO} \cB$ is a graded $\cO$-algebra, with $(\cB\otimes_{\cO}\cB)_n=\underset{p+q=n}\oplus\cB_p\otimes_{\cO}\cB_q$
and the natural morphism $\cB\otimes_{\cO}\cB\to\cB$ is a degree $0$ homogeneous morphism of graded algebras. Hence, the kernel $\Delta$ is a homogeneous ideal and $\Delta/\Delta^2=\Omega_{\cB/\cO}$ is a graded $\cB$-module. If $b_p,b_q\in \cB$ are homogeneous of degree $p,q$, then $b_p\di b_q$ is an element of $\Omega_{\cB/\cO}$ of degree $p+q$. We shall denote by $\Omega^p_{\cB/\cO}=\Lambda^p_{\cB} \Omega_{\cB/\cO}$, the $p$-th exterior power of $\Omega_{\cB/\cO}$, which is also a graded $\cB$-module in a natural way. For each $\cO$-module $\cN$, $\cN\otimes_{\cO} \cB$ is a graded $\cB$-module with gradation: $(\cN\otimes_{\cO} \cB)_n=\cN\otimes_{\cO} \cB_n$. Then one has the following basic result:

\begin{thm}\label{thm:relativediff}
The natural morphism of graded $\cB$-modules
$$\aligned \cM\otimes_\cO \cB[-1]&\to \Omega_{\cB/\cO}\\ m\otimes b&\mapsto b\di m\endaligned $$
is an isomorphism. Hence
$\Omega^p_{\cB/\cO}\simeq \Lambda^p\cM\otimes_{\cO}\cB[-p]$,
where  $\Lambda^i\cM=\Lambda^i_{\cO}\cM$.
\end{thm}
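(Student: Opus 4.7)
The plan is to reduce immediately to the affine case and then invoke the universal property of Kähler differentials. Since $\cB = S^\punto_\cO \cM$ commutes with localization and the formation of $\Omega_{\cB/\cO}$ is local, we may take $X = \Spec A$ and $\cM = \wt M$ for an $A$-module $M$; the claim then becomes the affine assertion that the map $\varphi\colon M\otimes_A S^\punto M \to \Omega_{S^\punto M/A}$, $m\otimes b\mapsto b\,\di m$, is an isomorphism of graded $S^\punto M$-modules, where $M$ sits in degree $1$ on the left.

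The main step is to prove $\varphi$ is an isomorphism by Yoneda. For any $S^\punto M$-module $N$, an $A$-derivation $D\colon S^\punto M\to N$ is entirely determined by its restriction $D_{|M}\colon M\to N$, which is $A$-linear; conversely, the Leibniz rule together with $D(m_1\cdots m_n) = \sum m_1\cdots\widehat{m_i}\cdots m_n\, D(m_i)$ extends any $A$-linear map $M\to N$ uniquely to a derivation of $S^\punto M$ over $A$. This gives a natural bijection
\[ \Hom_{S^\punto M}(\Omega_{S^\punto M/A},N) = \Der_A(S^\punto M,N) \iso \Hom_A(M,N) = \Hom_{S^\punto M}(M\otimes_A S^\punto M,N), \]
and an easy check shows that this bijection is precisely $\varphi^*$. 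Hence $\varphi$ is an isomorphism of $S^\punto M$-modules.

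For the gradings, the element $b\,\di m$ with $b\in\cB_n$ and $m\in\cM$ lies in $(\Omega_{\cB/\cO})_{n+1}$ by the grading recalled before the statement; consequently $\varphi$ sends $(\cM\otimes_\cO\cB)_n = \cM\otimes_\cO\cB_{n-1}$ to $(\Omega_{\cB/\cO})_n$, which is exactly the shift $\cM\otimes_\cO\cB[-1]$. For the exterior powers, one uses the general fact that for any $\cO$-algebra $\cB$ and any $\cO$-module $\cN$ one has $\Lambda^p_\cB(\cN\otimes_\cO\cB)\simeq \Lambda^p_\cO\cN\otimes_\cO\cB$, functorially in $\cN$; applied to $\cN=\cM$, this yields $\Omega^p_{\cB/\cO}\simeq\Lambda^p\cM\otimes_\cO\cB$, and the total degree shift becomes $[-p]$ since each wedge factor $\di m$ contributes one unit of degree.

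The only delicate point is really bookkeeping: one must be sure that the universal property of $\Omega_{\cB/\cO}$ and the symmetric/exterior power identities respect the gradings, which is automatic once the single generator $\di m$ is placed in degree $1$. No finiteness or freeness hypothesis on $\cM$ is required, so the statement holds for an arbitrary quasi-coherent $\cO$-module.
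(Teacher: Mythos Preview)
Your argument is correct. The paper itself does not supply a proof of this theorem: it is introduced as a ``basic result'' and stated without justification. Your approach---reduce to the affine case, then use that $A$-derivations of $S^\punto M$ into an $S^\punto M$-module $N$ are in natural bijection with $A$-linear maps $M\to N$ (the universal property of the symmetric algebra), and combine this with the universal property of K\"ahler differentials and the extension-of-scalars adjunction---is the standard way to establish the isomorphism, and your verification that the composite bijection is induced by $\varphi$ is exactly what is needed to invoke Yoneda. The grading check and the base-change identity $\Lambda^p_{\cB}(\cN\otimes_\cO\cB)\simeq (\Lambda^p_\cO\cN)\otimes_\cO\cB$ for the higher exterior powers are likewise routine and correctly handled.

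One cosmetic remark: the reduction to the affine case is not even strictly necessary, since the universal-property argument works verbatim for sheaves of $\cO$-modules; but it does no harm.
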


The natural morphism $\cM\otimes_{\cO}S^i\cM\to S^{i+1}_{}\cM$ defines a degree zero homogeneous morphism   of $\cB$-modules $\Omega_{\cB/\cO}=\cM\otimes_{\cO}\cB[-1]\to \cB$  which induces an $\cO$-derivation (of degree 0) $D\colon \cB\to \cB$, such that $\Omega_{\cB/\cO}\to \cB$ is the inner product with $D$. This derivation consists in multiplication by $n$ in degree $n$. It induces homogeneous morphisms of degree zero:
$$i_D\colon \Omega^{p}_{\cB/\cO}\to \Omega^{p-1}_{\cB/\cO}$$ and we obtain:

\begin{definition} The Koszul complex, denoted by $\Kos(\cM)$, is the complex:
\begin{equation}\label{eq:relativeKoszul}
\xymatrix@C=17pt{
\cdots \ar[r] & \Omega^{p}_{\cB/\cO}\ar[r]^(.5){i_D}&\Omega^{p-1}_{\cB/\cO}\ar[r]^(.6){i_D}&\cdots\ar[r]^(.3){i_D}&\Omega_{\cB/\cO}\ar[r]^(.55){i_D}& \cB\ar[r]  & 0
}
\end{equation}

Via Theorem \ref{thm:relativediff}, this complex is

$$\cdots \xrightarrow{i_D} \Lambda^{p}{\cM}\otimes_{\cO}\cB[-p]\xrightarrow{i_D}  \cdots \xrightarrow{i_D} \cM\otimes_{\cO}\cB[-1]\xrightarrow{i_D}   \cB\to 0$$

Taking the homogeneous components of degree $n\geq 0$, we obtain a complex of $\cO$-modules, which we denote by $\Kos(\cM)_n$:
\begin{equation*}
\xymatrix@C=17pt{
0\ar[r] & \Lambda^n\cM\ar[r] &  \Lambda^{n-1} \cM\otimes_{\cO} \cM\ar[r] &\dots\ar[r] & \cM\otimes_{\cO} S_{}^{n-1}\cM\ar[r] &S_{}^n\cM\ar[r] & 0
}
\end{equation*}
such that $\Kos(\cM)=\underset{n\geq 0}\oplus\Kos(\cM)_n$.
\end{definition}

Now let $\bbP=\Proj \cB$ and $\pi\colon\bbP \xrightarrow{} X$ the natural morphism. We shall use the following standard notations: for each $\cO_\bbP$-module $\cN$, we shall denote  $\cN(n)=\cN\otimes_{\cO_\bbP}\cO_\bbP(n)$ and for each graded $\cB$-module $N$ we shall denote by $\wt N$ the sheaf of $\cO_\bbP$-modules obtained by  homogeneous localization. We shall use without mention the following facts: homogeneous localization commutes with exterior powers and for any quasi-coherent module $\cL$ on $X$ one has $\wt{(\cL\otimes_\cO \cB[r])}=(\pi^*\cL)(r)$.

\begin{definition} Taking homogeneous localization on the Koszul complex (\ref{eq:relativeKoszul}), we obtain a complex of $\cO_{\bbP}$-modules, which we denote by $\widetilde\Kos(\cM)$:
\begin{equation}\label{eq:relativeKoszultilde}
\xymatrix@C=17pt{
\cdots \ar[r] & \widetilde\Omega^{d}_{\cB/\cO}\ar[r]^(.5){i_D}&\widetilde\Omega^{d-1}_{\cB/\cO}\ar[r]^(.6){i_D}&\cdots\ar[r]^(.34){i_D}&\widetilde\Omega_{\cB/\cO}\ar[r]^(.55){i_D}&\cO_\bbP\ar[r]  & 0
}
\end{equation}
\end{definition}

By Theorem \ref{thm:relativediff}, $\widetilde\Omega^{d}_{\cB/\cO}=(\pi^*\Lambda^d\cM)(-d)$, hence $\widetilde\Kos(\cM)$ can be written as
$$
\cdots \xrightarrow{i_D} (\pi^*\Lambda^d\cM) (-d) \xrightarrow{i_D} \cdots\to (\pi^*\cM) (-1)\xrightarrow{i_D} \cO_{\bbP} \to 0
$$

\begin{thm}\label{thm:Kostildeacyclic}
The complex $\widetilde\Kos(\cM)$ is acyclic (that is, an exact sequence). Moreover,
$$\Omega^p_{\bbP/X}=\Ker\Big( \wt\Omega^p_{\cB/\cO}\xrightarrow{i_D} \wt\Omega^{p-1}_{\cB/\cO}\Big).$$
Hence one has exact sequences
$$0\to\Omega^p_{\bbP/X}\xrightarrow{}\widetilde{\Omega}^p_{\cB/\cO}\xrightarrow{}\Omega_{\bbP/X}^{p-1}\to 0 $$
and right and left resolutions of $\Omega^p_{\bbP/X}$:
$$0\to\Omega^p_{{\bbP}/X}\xrightarrow{} \widetilde{\Omega}^p_{\cB/\cO}\xrightarrow{} \widetilde{\Omega}^{p-1}_{\cB/\cO}\xrightarrow{}\cdots\xrightarrow{} \widetilde{\Omega}_{\cB/\cO}\xrightarrow{}  \cO_{\bbP}\to 0$$
$$\cdots \to\widetilde{\Omega}^{r+1}_{\cB/\cO}\xrightarrow{} \widetilde{\Omega}^{r}_{\cB/\cO}\xrightarrow{}\cdots\xrightarrow{} \widetilde{\Omega}^{p+1}_{\cB/\cO}\xrightarrow{} \Omega^p_{{\bbP}/X}\xrightarrow{} 0$$
In particular, for $p=1$ the exact sequence
\begin{equation}\label{eq:RelativeEulerSeq}
0\to\Omega_{\bbP/X}\to \wt\Omega_{\cB/\cO}\to\cO_{\bbP}\to 0
\end{equation}
is called the (relative) Euler sequence.
\end{thm}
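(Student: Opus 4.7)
The plan is to prove the theorem locally on $\bbP$ using a contracting homotopy built from the logarithmic form $\di f/f$ on each standard affine open $D_+(f)$. The question is local on $X$, so assume $X=\Spec A$ is affine and $\cM=\wt M$, and write $S=S^{\punto}M$. Since $S$ is generated over $A$ by its weight-one part $M=\cB_1$, the opens $D_+(f)$ with $f\in M$ cover $\bbP=\Proj S$; on such an open, $\bbP|_{D_+(f)}=\Spec S_{(f)}$ and $\wt\Omega^p_{\cB/\cO}|_{D_+(f)}$ is the sheaf associated to the weight-zero part $(\Omega^p_{S_f/A})_0$, with $i_D$ inherited from $\Omega^p_{S_f/A}$.

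The key observation is that $\di f/f\in\Omega^1_{S_f/A}$ has weight zero and satisfies $i_D(\di f/f)=D(f)/f=1$, since $D$ acts by multiplication by weight and $\deg f=1$. Setting $\epsilon_f(\omega):=(\di f/f)\wedge\omega$, the fact that $i_D$ is a graded derivation of degree $-1$ yields the one-line Leibniz computation
$$i_D\epsilon_f(\omega)+\epsilon_f i_D(\omega)=i_D(\di f/f)\wedge\omega=\omega,$$
so $\epsilon_f$ is a contracting homotopy for $(\Omega^{\punto}_{S_f/A},i_D)$. Since $\epsilon_f$ preserves weight, it restricts to a contracting homotopy on the weight-zero part, proving that $\wt\Kos(\cM)|_{D_+(f)}$ is exact; gluing over a cover of $\bbP$ by such opens gives the acyclicity of $\wt\Kos(\cM)$.

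To identify the kernel of $i_D$ with $\Omega^p_{\bbP/X}$, I would use the graded isomorphism $S_f\simeq S_{(f)}[f,f^{-1}]$ (placing $f$ in degree one over the degree-zero subring $S_{(f)}$). The standard decomposition of $\Omega$ for a Laurent extension, combined with extraction of the weight-zero part, gives
$$(\Omega^p_{S_f/A})_0\simeq\Omega^p_{S_{(f)}/A}\oplus\bigl(\Omega^{p-1}_{S_{(f)}/A}\bigr)\wedge(\di f/f).$$
Because $D$ vanishes on $S_{(f)}$, the contraction $i_D$ kills the first summand and acts (up to sign) as $\omega\wedge(\di f/f)\mapsto\omega$ on the second; hence $\Ker(i_D)=\Omega^p_{S_{(f)}/A}=\Omega^p_{\bbP/X}|_{D_+(f)}$, and these local descriptions glue to a global identification.

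The short exact sequences $0\to\Omega^p_{\bbP/X}\to\wt\Omega^p_{\cB/\cO}\to\Omega^{p-1}_{\bbP/X}\to 0$ then follow by combining the kernel identification with acyclicity, and the right and left resolutions are obtained by splicing these short exact sequences along the exact complex $\wt\Kos(\cM)$; the case $p=1$ is the Euler sequence. The only mildly delicate step is the Laurent-style weight decomposition on $D_+(f)$ for an arbitrary (not necessarily locally free) $\cM$; once this is in place, everything follows from the single identity $i_D(\di f/f)=1$.
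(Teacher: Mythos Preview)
Your argument is correct and takes a genuinely different route from the paper's. The paper first observes that $\wt\Omega_{\cB/\cO}\to\cO_\bbP$ is surjective with kernel $K$, notes that the short exact sequence $0\to K\to\wt\Omega_{\cB/\cO}\to\cO_\bbP\to 0$ splits locally (because $\cO_\bbP$ is free), and then uses the resulting exterior-power sequences $0\to\Lambda^pK\to\wt\Omega^p_{\cB/\cO}\to\Lambda^{p-1}K\to 0$ spliced together to recover $\wt\Kos(\cM)$ and conclude acyclicity; the identification $K=\Omega_{\bbP/X}$ is then done only for $p=1$, by constructing the map $\Omega_{\bbP/X}\to\wt\Omega_{\cB/\cO}$ on each $U_b$, checking injectivity via an explicit retract, and verifying separately that its image contains the image of $i_D$ from $\wt\Omega^2_{\cB/\cO}$ and is contained in $\Ker i_D$.

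Your approach replaces both steps by a single local computation: the contracting homotopy $\epsilon_f=(\di f/f)\wedge-$ gives acyclicity on each $D_+(f)$ directly (and in fact produces an explicit local splitting, via the idempotent $\epsilon_f\circ i_D$, of the very sequences the paper obtains abstractly), while the Laurent decomposition $S_f\simeq S_{(f)}[f,f^{-1}]$ yields the kernel identification for \emph{all} $p$ at once rather than reducing to $p=1$. The paper's argument is more structural and makes transparent that everything is governed by the $p=1$ Euler sequence; your argument is more explicit and shows that the chart-by-chart splittings exist integrally (no $\bbQ$-hypothesis), which is a slight sharpening of what the paper records. Your caveat about the Laurent decomposition for non--locally-free $\cM$ is well placed but harmless: since $f$ is a unit in $S_f$, the graded ring map $S_{(f)}[t,t^{-1}]\to S_f$, $t\mapsto f$, is a graded isomorphism, and the standard splitting of differentials for a Laurent extension then gives exactly the weight-zero decomposition you use.
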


\begin{proof}
The morphism $\wt\Omega_{\cB/\cO}\to \cO_\bbP$ is surjective, since  $\cM\otimes_\cO  \cB[-1]\to \cB$ is surjective in positive degree. Let $K$ be the kernel. We obtain an exact sequence
\[ 0\to K\to \wt\Omega_{\cB/\cO}\to \cO_\bbP\to 0\] Since $\cO_\bbP$ is free, this sequence splits  locally; then, it induces exact sequences
\[ 0\to\Lambda^p K \to \wt\Omega^p_{\cB/\cO}\to \Lambda^{p-1}K\to 0\] Joining these exact sequences one obtains the Koszul complex $\widetilde\Kos (\cM)$. This proves the acyclicity of $\widetilde\Kos (\cM)$. To conclude, it suffices to prove that $K=\Omega_{\bbP/X}$.

Let us first define a morphism $\Omega_{\bbP/X}\to \widetilde\Omega_{\cB/\cO}$. Assume for simplicity that $X=\Spec A$. For each $b\in \cB$ of degree 1, let $U_b$ the standard affine open subset of $\bbP$ defined by $U_b=\Spec (\cB_{(b)})$, with $\cB_{(b)}$ the $0$-degree component of $\cB_b$. The natural inclusion $\cB_{(b)}\to \cB_b$ induces a morphism $\Omega_{\cB_{(b)}/A}\to \Omega_{\cB_b/A}=(\Omega_{\cB/A})_b$ which takes values in the $0$-degree component, $(\Omega_{\cB/A})_{(b)}$. Thus one has a morphism $\Omega_{\cB_{(b)}/A}\to (\Omega_{\cB/A})_{(b)}$, i.e. a morphism
$\Gamma(U_b,\Omega_{\bbP/X})\to \Gamma(U_b, \widetilde\Omega_{\cB/A}) $. One checks that these morphisms glue to a morphism $f\colon \Omega_{\bbP/X}\to \widetilde\Omega_{\cB/A}$. This morphism is injective, because the inclusion $\cB_{(b)}\to \cB_b$ has a retract, $c_n/b^k\mapsto c_n/b^n$, which induces a retract in the differentials. The composition $\Omega_{\bbP/X}\to \widetilde\Omega_{\cB/A}\to\cO_\bbP$ is null, as one checks in each $U_b$:
\[ (i_D\circ f)( \di(c_k/b^k))=i_D\left(\frac{b^k\di c_k- c_k\di b^k}{b^{2k}}\right)=\frac{b^ki_D\di c_k- c_k i_D\di b^k}{b^{2k}} =0\] because $i_D\di c_r=rc_r$ for any element $c_r$ of degree $r$. Thus, we have that $\Omega_{\bbP/X}$ is contained in the kernel of  $\widetilde\Omega_{\cB/A}\to \cO_{\bbP}$. To conclude, it is enough to see that the image of $\widetilde\Omega^2_{\cB/A}\overset{i_D}\to \widetilde\Omega_{\cB/A}$ is contained in $\Omega_{\bbP/X}$. Again, this is a computation in each $U_b$; one checks the equality
\[ i_D\left(\frac {\di c_p\wedge \di c_q}{b^{p+q}}\right)= p\frac{c_p}{b^p}\di \left(\frac{c_q}{b^q}\right)-q \frac{c_q}{b^q}\di \left(\frac{c_p}{b^p}\right)\] and the right member belongs to $\Omega_{\cB_{(b)}/A}$.
\end{proof}

For each $n\in \bbZ$, we shall denote by $\widetilde\Kos(\cM) (n)$ the complex $\widetilde\Kos(\cM)$ twisted by $\cO_\bbP(n)$ (notice that the differential of the Koszul complex is $\cO_\bbP$-linear). The differential of the complex  $\widetilde\Kos(\cM) (n)$ is still denoted by $i_D$.

\subsection{Acyclicity of the Koszul complex of a module}$\,$
\medskip

Let us  denote $\widetilde\Kos(\cM)_n :=\pi_*(\widetilde\Kos(\cM) (n))$. The natural morphisms $[\Omega^p_{\cB/\cO}]_n \to \pi_*[\wt\Omega^p_{\cB/\cO}(n)]$ give a morphism of complexes
\[ \Kos(\cM)_n \to \widetilde\Kos(\cM)_n \] and one has:

\begin{thm}\label{thm:n-Koszulacyclic} Let  $\cM$ be a finitely generated quasi-coherent module on a scheme $(X,\cO)$, $\bbP=\Proj S^\punto \cM$ and $\pi\colon \bbP\to X$ the natural morphism. Let $d$ be the minimal number of generators of $\cM$ (i.e., it is the greatest integer such that $\Lambda^d \cM\neq 0$) and $n> 0$. Then:
\begin{enumerate}
\item If $ R^j \pi_*[ \wt\Omega^i_{\cB/\cO}(n)] =0$ for any $j>0$ and any $0\leq i\leq d$, then $\widetilde\Kos(\cM)_n $ is acyclic.

 \item If (1) holds and the natural morphism $[\Omega^i_{\cB/\cO}]_n \to \pi_*[\wt\Omega^i_{\cB/\cO}(n)]$ is an isomorphism for any $0\leq i\leq d$, then $\Kos(\cM)_n $ is also acyclic. \end{enumerate}
\end{thm}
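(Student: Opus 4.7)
The plan is to lean entirely on the structural result already established in Theorem \ref{thm:Kostildeacyclic}. That theorem presents $\wt\Kos(\cM)$ not merely as an exact complex but as the concatenation of the short exact sequences
\[ 0\to \Omega^p_{\bbP/X}\to \wt\Omega^p_{\cB/\cO}\to \Omega^{p-1}_{\bbP/X}\to 0.\]
Since tensoring with the invertible sheaf $\cO_\bbP(n)$ is exact, the twisted complex $\wt\Kos(\cM)(n)$ splits analogously, and because $\Lambda^p\cM=0$ for $p>d$ this complex has length $d$ (with the additional convenience that $\Omega^d_{\bbP/X}(n)=0$, being the kernel of the injective left-most arrow $\wt\Omega^d_{\cB/\cO}\hookrightarrow\wt\Omega^{d-1}_{\cB/\cO}$). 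Thus part (1) amounts to showing that $\pi_*$ preserves each of the twisted short exact sequences, which is equivalent to the vanishing $R^1\pi_*\Omega^p_{\bbP/X}(n)=0$ for every $1\leq p\leq d$.

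To obtain this vanishing I would carry out a descending induction on $p$, proving the stronger statement $R^j\pi_*\Omega^p_{\bbP/X}(n)=0$ for all $j>0$. The base case $p=d$ is trivial as $\Omega^d_{\bbP/X}(n)=0$. For the inductive step, applying $\pi_*$ to the twisted short exact sequence yields
\[\cdots\to R^j\pi_*\wt\Omega^p_{\cB/\cO}(n)\to R^j\pi_*\Omega^{p-1}_{\bbP/X}(n)\to R^{j+1}\pi_*\Omega^p_{\bbP/X}(n)\to R^{j+1}\pi_*\wt\Omega^p_{\cB/\cO}(n)\to\cdots\]
The outer terms vanish for $j\geq 1$ by the hypothesis of (1), and the right inner term vanishes by the induction hypothesis; hence the middle term vanishes as well. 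For $j=0$ the same sequence together with the vanishing of $R^1\pi_*\wt\Omega^p_{\cB/\cO}(n)$ and the (already established) vanishing of $R^1\pi_*\Omega^p_{\bbP/X}(n)$ forces the surjectivity of $\pi_*\wt\Omega^p_{\cB/\cO}(n)\to\pi_*\Omega^{p-1}_{\bbP/X}(n)$. Gluing the resulting short exact sequences
\[ 0\to \pi_*\Omega^p_{\bbP/X}(n)\to \pi_*\wt\Omega^p_{\cB/\cO}(n)\to \pi_*\Omega^{p-1}_{\bbP/X}(n)\to 0\]
reconstructs $\wt\Kos(\cM)_n=\pi_*\wt\Kos(\cM)(n)$ as an exact complex.

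For part (2), the additional hypothesis simply identifies the morphism of complexes $\Kos(\cM)_n\to \wt\Kos(\cM)_n$ term by term as an isomorphism, so acyclicity of $\Kos(\cM)_n$ transports directly from that of $\wt\Kos(\cM)_n$ proved in (1).

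There is no real obstacle here: the content of the theorem is essentially a formal consequence of Theorem \ref{thm:Kostildeacyclic} and the long exact sequence of higher direct images. The only bookkeeping care required is ensuring that the descending induction is seeded correctly (using $\Omega^d_{\bbP/X}=0$, which is built into the length of the Koszul complex under the hypothesis that $d$ bounds the exterior algebra of $\cM$) and running the induction in the two variables $(p,j)$ simultaneously so that the $R^j$ hypothesis on $\wt\Omega^p_{\cB/\cO}(n)$ is consumed at each step.
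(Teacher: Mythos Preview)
Your proof is correct and follows essentially the same approach as the paper: both use the acyclicity of $\wt\Kos(\cM)(n)$ from Theorem~\ref{thm:Kostildeacyclic} together with the $\pi_*$-acyclicity of its terms. The paper simply invokes in one line the standard fact that a left-exact functor applied to a bounded exact complex of acyclic objects yields an exact complex, whereas you unpack that fact explicitly via the short exact sequences with factors $\Omega^p_{\bbP/X}(n)$ and a descending induction on $p$ (correctly seeding the induction with $\Omega^d_{\bbP/X}=0$, which follows from $\Lambda^{d+1}\cM=0$ and exactness at $\wt\Omega^d_{\cB/\cO}$).
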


\begin{proof}
$(1)$ By Theorem \ref{thm:Kostildeacyclic}, the complex $\widetilde\Kos(\cM)  (n)$  is acyclic. Since the (non-zero) terms of this complex are $ \wt\Omega^i_{\cB/\cO}(n)$, the hypothesis tells us that $\pi_*(\widetilde\Kos(\cM)\otimes \cO_\bbP(n))$ is acyclic, that is, $\widetilde\Kos(\cM)_n $ is acyclic.

$(2)$ By hypothesis,  $\Kos(\cM)_n \to \widetilde\Kos(\cM)_n $ is an isomorphism and then $\Kos(\cM)_n $ is also acyclic.
\end{proof}

\begin{thm}\label{thm:Koszulacyclic-n>>0} Let $X$ be a noetherian scheme and $\cM$ a coherent module on $X$. The Koszul complexes $\Kos(\cM)_n$ and $\widetilde\Kos(\cM)_n  $ are acyclic for $n>>0$.
\end{thm}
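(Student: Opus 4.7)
The plan is to apply Theorem~\ref{thm:n-Koszulacyclic}, producing a threshold $n_0$ beyond which both of its hypotheses hold uniformly in $i\in\{0,\dots,d\}$. First I verify that $d<\infty$: because $\cM$ is coherent on a noetherian scheme, on each affine open $\Spec A\subset X$ the module $\cM(\Spec A)$ is finitely generated, so $\Lambda^d\cM$ vanishes on that open whenever $d$ exceeds its minimal number of generators; quasi-compactness of $X$ then supplies a uniform bound, and the complexes $\Kos(\cM)$ and $\wt\Kos(\cM)$ have only finitely many nonzero terms.

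Next I repackage those terms as coherent graded $\cB$-modules. Since $\cM$ is coherent, $\cB=S^\punto\cM$ is a finitely generated graded $\cO_X$-algebra, hence $\pi\colon\bbP\to X$ is projective with $\cO_\bbP(1)$ relatively ample. By Theorem~\ref{thm:relativediff}, $\Omega^i_{\cB/\cO}\simeq \Lambda^i\cM\otimes_\cO\cB[-i]$ is a coherent graded $\cB$-module (finitely generated in degree $i$ by the coherent $\cO_X$-module $\Lambda^i\cM$), whose associated sheaf on $\bbP$ is $\wt\Omega^i_{\cB/\cO}=(\pi^*\Lambda^i\cM)(-i)$.

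The heart of the argument is the standard Serre comparison and vanishing theorem for $\Proj$ over a noetherian scheme, applied to each of the coherent graded $\cB$-modules $\Omega^i_{\cB/\cO}$ with $0\le i\le d$: there exists $n_i$ such that, for all $n\ge n_i$, one has both $R^j\pi_*[\wt\Omega^i_{\cB/\cO}(n)]=0$ for every $j>0$ and the natural map $[\Omega^i_{\cB/\cO}]_n\iso \pi_*[\wt\Omega^i_{\cB/\cO}(n)]$ is an isomorphism. Setting $n_0=\max\{n_i:0\le i\le d\}$, which is finite by the first step, conditions (1) and (2) of Theorem~\ref{thm:n-Koszulacyclic} are simultaneously satisfied for every $n\ge n_0$, and that theorem yields the acyclicity of $\wt\Kos(\cM)_n$ and $\Kos(\cM)_n$.

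There is no deep obstacle here; the plan is essentially bookkeeping. The substantive ingredients are all classical: the finiteness of $d$ (requiring $X$ noetherian and $\cM$ coherent so that quasi-compactness applies), the identification of $\Omega^i_{\cB/\cO}$ as a coherent graded $\cB$-module (requiring $\cB$ finitely generated, hence $\cM$ coherent), and the Serre comparison/vanishing theorem for coherent graded modules on $\Proj\cB$ over a noetherian base, which provides simultaneously the higher direct image vanishing and the global-section isomorphism for $n\gg 0$.
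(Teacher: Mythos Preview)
Your proposal is correct and follows essentially the same approach as the paper: both reduce immediately to Theorem~\ref{thm:n-Koszulacyclic} and observe that its hypotheses (1) and (2) hold for $n\gg 0$ by the standard Serre vanishing and comparison theorems for $\Proj$ over a noetherian base (the paper simply cites the relevant EGA references). Your version spells out the bookkeeping---finiteness of $d$, coherence of the graded modules $\Omega^i_{\cB/\cO}$, and taking the maximum threshold over finitely many $i$---which the paper leaves implicit, but there is no difference in strategy.
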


\begin{proof} Indeed,  the hypothesis $(1)$ and $(2)$ of Theorem \ref{thm:n-Koszulacyclic} hold for $n>>0$ (see \cite[Theorem 2.2.1]{EGAIII-1} and \cite[Section 3.3 and Section 3.4]{EGAII}).
\end{proof}

\begin{thm}\label{thm:KoszulacyclicIdeal} Let $I$ be an ideal of a noetherian ring $A$. If $I$ is locally generated by a regular sequence, then $\Kos(I)_n$ and $\widetilde\Kos(I)_n $ are acyclic for any $n>0$.
\end{thm}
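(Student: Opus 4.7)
The argument is local on $\Spec A$, so I may assume $A$ is noetherian local with maximal ideal $\mathfrak m$ and that $I=(x_1,\dots,x_d)$ is generated by a regular sequence; then $d$ coincides with the minimal number of generators of $I$. I plan to apply Theorem~\ref{thm:n-Koszulacyclic}, so it is enough to verify its two hypotheses for each $n>0$ and $0\le i\le d$. Since $x_1,\dots,x_d$ is a regular sequence, the canonical map $S^\punto I\to\bigoplus_nI^n$ is an isomorphism of graded $A$-algebras, so $\cB=S^\punto I$ is the Rees algebra and $\pi\colon\bbP=\Proj\cB\to\Spec A$ is the blow-up of $\Spec A$ along $V(I)$. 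By Theorem~\ref{thm:relativediff}, $\wt\Omega^i_{\cB/\cO}(n)=(\pi^*\Lambda^iI)(n-i)$.

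The fibre $\pi^{-1}(x)$ over the closed point is $\bbP^{d-1}_{k(x)}$, because $\cB\otimes_A k(x)=S^\punto(I/\mathfrak mI)=k(x)[y_1,\dots,y_d]$ (the space $I/\mathfrak mI$ has dimension $d$ over $k(x)$ by Nakayama, as $\{x_1,\dots,x_d\}$ is minimal). Since $\Lambda^i$ commutes with base change, the restriction of $\wt\Omega^i_{\cB/\cO}(n)$ to this fibre is $\cO_{\bbP^{d-1}}(n-i)^{\oplus\binom{d}{i}}$. For $n>0$ and $0\le i\le d$ one has $n-i\ge 1-d>-d$, and so Serre's cohomology of $\bbP^{d-1}$ gives $H^j(\bbP^{d-1},\cO(n-i))=0$ for $j>0$, while $H^0$ equals $S^{n-i}k(x)^d$ (understood as zero when $n<i$). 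An induction on $k$ via the short exact sequences
\[
0\to(\mathfrak m^k/\mathfrak m^{k+1})\otimes_{k(x)}\wt\Omega^i_{\cB/\cO}(n)\big|_{\bbP_0}\to\wt\Omega^i_{\cB/\cO}(n)\big|_{\bbP_k}\to\wt\Omega^i_{\cB/\cO}(n)\big|_{\bbP_{k-1}}\to 0
\]
on the infinitesimal neighbourhoods $\bbP_k$ of $\pi^{-1}(x)$ then propagates the vanishing of higher $H^j$ to every $\bbP_k$, and exhibits the inverse system $\{H^0(\bbP_k,\wt\Omega^i_{\cB/\cO}(n)|_{\bbP_k})\}$ as Mittag--Leffler with limit equal to the $\mathfrak m$-adic completion of $\Lambda^iI\otimes_AI^{n-i}$; the latter identification uses the regular-sequence hypothesis in the form $I^m/\mathfrak mI^m\cong S^m(I/\mathfrak mI)$, so that the successive extensions coming from the sequences above reproduce the graded pieces of the completion.

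By the theorem of formal functions (applicable since $\pi$ is proper), the first conclusion gives hypothesis~(1) of Theorem~\ref{thm:n-Koszulacyclic}: $R^j\pi_*\wt\Omega^i_{\cB/\cO}(n)=0$ for $j>0$. Combined with cohomology and base change, which is legitimate thanks to (1), the second conclusion shows that the natural morphism $[\Omega^i_{\cB/\cO}]_n\to\pi_*\wt\Omega^i_{\cB/\cO}(n)$ becomes an isomorphism after $\mathfrak m$-adic completion; both members being finitely generated $A$-modules over the local noetherian ring $A$, the morphism is already an isomorphism, which is hypothesis~(2). I expect the delicate step to be precisely the identification of the inverse limit in formal functions with the completion of $\Lambda^iI\otimes_AI^{n-i}$: in the range $n\ge i$ it depends on the regular-sequence structure of $I$ as described above, while in the range $n<i$ one needs the vanishing $H^0(\bbP^{d-1},\cO(n-i))=0$ to collapse the limit to zero, matching $[\Omega^i_{\cB/\cO}]_n=0$. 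This is the geometric content that Theorem~\ref{thm:n-Koszulacyclic} requires beyond the mere cohomological vanishing in Theorem~\ref{thm:Koszulacyclic-n>>0}, and that forces us to invoke the theorem of formal functions rather than just the ``$n\gg 0$'' argument from EGA.
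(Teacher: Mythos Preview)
Your overall strategy---localize, invoke Theorem~\ref{thm:n-Koszulacyclic}, and verify its hypotheses via formal functions---matches the paper's. But there is a genuine gap: you filter by powers of the maximal ideal $\mathfrak m$, whereas the paper filters by powers of $I$, and this difference is not cosmetic.

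The short exact sequence you write,
\[
0\to(\mathfrak m^k/\mathfrak m^{k+1})\otimes_{k(x)}\cF\big|_{\bbP_0}\to\cF\big|_{\bbP_k}\to\cF\big|_{\bbP_{k-1}}\to 0,
\qquad\cF=\wt\Omega^i_{\cB/\cO}(n),
\]
asserts that tensoring the sequence $0\to\mathfrak m^k/\mathfrak m^{k+1}\to A/\mathfrak m^{k+1}\to A/\mathfrak m^k\to 0$ with $\cF$ over $A$ stays exact on the left. That requires $\cF$ (equivalently $\cO_\bbP$) to be flat over $A$, and the blow-up is \emph{not} flat as soon as $d\ge 2$. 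Concretely, take $A=k[x,y,z]_{(x,y,z)}$ and $I=(x,y)$; on the chart $y=xt$ one has $\cO_\bbP=k[x,z,t]$ with $\mathfrak m\cO_\bbP=(x,z)$, so the actual kernel $\mathfrak m\cO_\bbP/\mathfrak m^2\cO_\bbP$ is free of rank~$2$ over $\cO_{\bbP_0}=k[t]$, not rank $3=\dim_k(\mathfrak m/\mathfrak m^2)$. Your left-hand term is too large and the map is not injective; the induction never gets started, and the identification of the inverse limit you flag as ``delicate'' cannot be carried out as stated.

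The paper's choice of the $I$-adic filtration fixes this. The crucial point is that $I\cO_\bbP$ is an \emph{invertible} ideal sheaf---it cuts out the exceptional Cartier divisor $E$ and is isomorphic to $\cO_\bbP(1)$---so the genuine graded pieces $I^k\cO_\bbP/I^{k+1}\cO_\bbP\cong\cO_E(k)$ are line bundles on $E$. Here $E=\Proj S^\punto_{A/I}(I/I^2)$ really \emph{is} a $\bbP^{d-1}$-bundle over $\Spec(A/I)$, because the regular-sequence hypothesis makes $I/I^2$ free of rank $d$ over $A/I$; this is where that hypothesis enters. The cohomology of each thickening $E_r$ is then computed inductively from the cohomology of the projective bundle $E$, and $I$-adic formal functions yields both hypotheses of Theorem~\ref{thm:n-Koszulacyclic}. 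If you replace $\mathfrak m$ by $I$ throughout your argument and use these exact sequences instead, your outline goes through.
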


\begin{proof} In this case $\pi\colon  \bbP\to X=\Spec A$ is the blow-up with respect to $I$, because $S^n I=I^n$, since $I$ is locally a regular ideal (\cite{Mic64}). Let $d$ be the minimum number of generators of $I$.
By Theorem \ref{thm:n-Koszulacyclic}, it suffices to see that for any $A$-module $M$ and any $0\leq i\leq d$ one has:
\[ H^j(\bbP, (\pi^*M) (n-i))=\left\{\aligned \quad 0\quad \quad,&\text{ if } j>0\\ M\otimes_AI^{n-i}, &\text{ if } j=0\endaligned\right.\]

This is a consequence of the Theorem of formal functions (see \cite[Corollary 4.1.7]{EGAIII-1}). Indeed,   let us denote $Y_r=\Spec A/I^r$,  $E_r=\pi^{-1}(Y_r)$ and $\pi_r\colon E_r\to Y_r$. One has that $E_r=\Proj S^\punto_{A/I^r}(I/I^{r+1})$ is a projective bundle  over $Y_r$, because $I/I^{r+1}$ is a locally free $A/I^r$-module of rank $d$, since $I$ is locally regular. Hence, for any module $N$ on $Y_r$ and any $m> -d$ one has
\[ H^j(E_r,(\pi_r^*N) (m))=\left\{\aligned \quad 0\quad\quad\quad ,&\text{ if } j>0\\ N\otimes_{A/I^r}I^{m}/I^{m+r}, &\text{ if } j=0\endaligned\right.\]

Now, by the theorem of formal functions (let $m=n-i$)
\[ H^j(\bbP, (\pi^*M) (m))^\wedge = \plim{r} H^j(E_r, \pi_r^*(M/I^rM) (m))=0, \text { for } j>0.\]

For $j=0$, the natural morphism $M\otimes_A I^m\to H^0(\bbP, (\pi^*M)  (m))$ is an isomorphism because it is an isomorphism after completion by $I$:

\[ \aligned H^0(\bbP, (\pi^*M) (m))^\wedge &= \plim{r} H^0(E_r, \pi_r^*(M/I^rM) (m))\\ &= \plim{r} (M/I^rM)\otimes_{A/I^r} I^m/I^{m+r}\\ &=\plim{r} (M\otimes_AS^mI)\otimes_A A/I^r = (M\otimes_AI^m)^\wedge \endaligned\]

\end{proof}

\begin{remark} Let $d$ be the minimum number of generators of $\cM$. Since $\widetilde\Kos(\cM)$ is acyclic and $\pi_*$ is left exact, one has that $H_d(\widetilde\Kos(\cM)_n)=0$ for any $n$. One the other hand, it is proved in \cite{GPV07} that $H_d(\Kos(\cM)_d)=0$. One cannot expect $\Kos(\cM)_n\to \widetilde\Kos(\cM)_n$ to be an isomorphism in general. For instance, consider $X=\Spec A$ with $A=k[u,v,s_1,s_2,t_1,t_2]/I$ where $k$ is a field and $I=(-us_1+vt_1+ut_2,vs_1+us_2-vt_2,vs_2,ut_1)$. Let $M= (Ax\oplus Ay)/A(\bar ux+\bar vy)$, where $\bar u$ (resp. $\bar v$) is the class of $u$ (resp. $v$) in A. Then one can prove that the map $M\to \pi_*\cO_{\bbP}(1)$ is not injective (for details we refer to section 26.21 of The Stacks project). So that the question which arises here is whether  $\Kos(\cM)_n\to \widetilde\Kos(\cM)_n$ is a quasi-isomorphism. We do not know the answer, besides  the acyclicity theorems for both complexes mentioned above.
\end{remark}

\subsection{Koszul versus De Rham}\label{sec:DeRham}

The exterior differential defines morphisms $$\di\colon \Omega^p_{\cB/\cO}\to \Omega^{p+1}_{\cB/\cO}$$ which are $\cO$-linear, but not $\cB$-linear. One has then the De Rham complex:

\[ \DeRham(\cM)\equiv 0\to \cB\overset\di \to \Omega_{\cB/\cO}\overset\di \to\cdots\overset\di\to\Omega^{p}_{\cB/\cO}\overset\di\to \Omega^{p+1}_{\cB/\cO}\to \cdots\] which can be reformulated as
\[ 0\to \cB\overset\di \to \cM\otimes_\cO \cB[-1] \to\cdots \Lambda^{p}\cM\otimes_\cO \cB[-p] \to \Lambda^{p+1} \cM\otimes_\cO \cB[-p-1]\to \cdots\]
Taking into account that $\di$ is homogeneous of degree 0, one has for each $n\geq 0$ a complex  of $\cO$-modules
\[ \DeRham(\cM)_n \equiv 0\to S^n\cM\to \cM\otimes_\cO S^{n-1}\to\cdots \to \Lambda^{n-1}\cM\otimes_\cO \cM\to \Lambda^n\cM\to 0\]

The differentials of the Koszul and De Rham complexes are related by Cartan's formula: $i_D\circ \di+ \di\circ i_D=$  multiplication by $n$ on $\Lambda^p\cM\otimes_\cO S^{n-p}\cM$. This immediately implies the following result:

\begin{prop} If $X$ is a scheme over $\bbQ$, then $\Kos(\cM)_n $ and $\DeRham(\cM)_n$ are homotopically trivial for any $n>0$. In particular, they are acyclic.
\end{prop}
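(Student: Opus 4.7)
The plan is a one-line application of Cartan's formula, made feasible by the fact that $n$ is invertible in $\bbQ$. As stated immediately before the proposition, one has the identity
$$ i_D\circ \di + \di\circ i_D = n\cdot \operatorname{id} $$
on each component $\Lambda^p\cM\otimes_\cO S^{n-p}\cM$. The crucial point is that this identity holds uniformly on every term of both $\Kos(\cM)_n$ and $\DeRham(\cM)_n$, which share the same underlying graded $\cO$-module and differ only in the choice of differential.

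For $\Kos(\cM)_n$, whose differential $i_D$ decreases the exterior degree by one, I would define the degree $+1$ map
$$ h := \tfrac{1}{n}\di \colon \Lambda^p\cM\otimes_\cO S^{n-p}\cM \longrightarrow \Lambda^{p+1}\cM\otimes_\cO S^{n-p-1}\cM. $$
Dividing Cartan's formula by $n$ then yields $i_D\circ h + h\circ i_D = \operatorname{id}$, exhibiting $h$ as a chain contraction, so $\Kos(\cM)_n$ is homotopically trivial. Symmetrically, for $\DeRham(\cM)_n$, whose differential $\di$ increases the exterior degree by one, I would set $h' := \tfrac{1}{n}i_D$; the same identity gives $\di\circ h' + h'\circ \di = \operatorname{id}$, so $\DeRham(\cM)_n$ is also homotopically trivial.

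Since any complex that admits a chain contraction is acyclic, the final assertion follows at once. There is really no obstacle here: the only ingredient beyond Cartan's formula (which is supplied by the text) is the invertibility of $n$ in $\cO$ over a $\bbQ$-scheme, so the proof amounts to a single rescaling.
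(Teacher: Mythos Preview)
Your proof is correct and is precisely the argument the paper intends: the text states Cartan's formula and then says it ``immediately implies'' the proposition, and you have simply spelled out that implication by dividing by $n$ (invertible over $\bbQ$) to produce the contracting homotopies $h=\tfrac{1}{n}\di$ and $h'=\tfrac{1}{n}i_D$.
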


Now we pass to homogeneous localizations. The differential $\di\colon \Omega^p_{\cB/\cO}\to \Omega^{p+1}_{\cB/\cO}$ is compatible with homogeneous localization, since for any $\omega_{k+n}\in \Omega^p_{\cB/\cO}$ of degree $k+n$ and any $b\in \cB$ of degree $1$, one has:
\[ \di\left( \frac {\omega_{k+n}}{b^n}\right)= \frac{b^n\di \omega_{k+n}- (\di b^n)\wedge \omega_{k+n}}{b^{2n}}.\]
Thus, for any $n\in \bbZ$, one has ($\cO$-linear) morphisms of sheaves $$\di\colon \widetilde\Omega^p_{\cB/\cO}  (n)\to \widetilde\Omega^{p+1}_{\cB/\cO} (n)$$ and we obtain, for each $n$, a complex of sheaves on $\bbP$:
\[ \widetilde\DeRham (\cM,n) =   0\to \cO_\bbP (n) \overset\di \to \widetilde\Omega_{\cB/\cO}  (n)\overset\di \to\cdots\overset\di\to \widetilde\Omega^{p}_{\cB/\cO} (n)\to \cdots\]
which can be reformulated as
$$0\to \cO_\bbP(n) \overset\di \to (\pi^*\cM) (n-1) \to\cdots   \to (\pi^*\Lambda^p\cM)  (n-p)\to \cdots$$
It should be noticed that $\widetilde\DeRham (\cM,n)$ is not the complex obtained for $n=0$ twisted by $\cO_\bbP(n)$, because the differential is not $\cO_\bbP$-linear.

Again, one has that $i_D\circ\di+\di\circ i_D=$  multiplication by $n$, on $\wt\Omega^p_{\cB/\cO} (n)$. Hence, one has:

\begin{prop}\label{prop:RelativeHomotopTrivial} If $X$ is a scheme over $\bbQ$, then the complexes $\widetilde\Kos (\cM) (n)$ and $\widetilde\DeRham (\cM,n)$ are homotopically trivial for any $n\neq 0$.
\end{prop}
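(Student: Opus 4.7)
The proof is essentially an immediate application of Cartan's formula, which the paper has just reproved in the localized setting: on $\wt\Omega^p_{\cB/\cO}(n)$ one has the identity $i_D\circ\di+\di\circ i_D=n\cdot\mathrm{id}$. Since $X$ is a $\bbQ$-scheme, the integer $n\neq 0$ is invertible in $\cO$, hence in every sheaf appearing, so dividing by $n$ makes sense globally on $\bbP$. This is the only place where the hypothesis on $\bbQ$ enters.

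For the Koszul complex $\wt\Kos(\cM)(n)$, whose differential $i_D\colon\wt\Omega^p_{\cB/\cO}(n)\to\wt\Omega^{p-1}_{\cB/\cO}(n)$ lowers degree, I would define the contracting homotopy
\[ h:=\tfrac{1}{n}\,\di\colon \wt\Omega^{p-1}_{\cB/\cO}(n)\to \wt\Omega^{p}_{\cB/\cO}(n).\]
Cartan's formula then gives $i_D\circ h+h\circ i_D=\tfrac{1}{n}(i_D\di+\di\, i_D)=\mathrm{id}$, so the Koszul complex is null-homotopic. Note that $h$ is only $\cO$-linear, not $\cO_\bbP$-linear, but this is enough for the notion of homotopical triviality in the category of complexes of $\cO$-modules, and the text has already observed that $\di$ is well-defined after homogeneous localization.

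For the twisted De Rham complex $\wt\DeRham(\cM,n)$, whose differential $\di\colon \wt\Omega^p_{\cB/\cO}(n)\to\wt\Omega^{p+1}_{\cB/\cO}(n)$ raises degree, I would dually define
\[ h':=\tfrac{1}{n}\,i_D\colon \wt\Omega^{p+1}_{\cB/\cO}(n)\to \wt\Omega^{p}_{\cB/\cO}(n),\]
and the same identity gives $\di\circ h'+h'\circ\di=\mathrm{id}$.

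There is no real obstacle here; the substantive work was done earlier in establishing that the operators $i_D$ and $\di$ descend to the homogeneous localizations and satisfy Cartan's formula in this setting. The only point worth double-checking is that the homotopy operators are defined on the zeroth piece as well: for $\wt\Kos(\cM)(n)$ this amounts to $\di\colon\cO_\bbP(n)\to\wt\Omega_{\cB/\cO}(n)$ being well-defined, which is the $p=0$ case of the compatibility with localization noted in the paragraph preceding the statement, and analogously for $\wt\DeRham(\cM,n)$ one uses $i_D$ on $\wt\Omega_{\cB/\cO}(n)\to\cO_\bbP(n)$.
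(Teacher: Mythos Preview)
Your proof is correct and follows exactly the approach the paper intends: the proposition is stated immediately after the observation that $i_D\circ\di+\di\circ i_D$ equals multiplication by $n$ on $\wt\Omega^p_{\cB/\cO}(n)$, with no further argument given, so the authors regard the explicit homotopies $h=\tfrac{1}{n}\di$ and $h'=\tfrac{1}{n}i_D$ as implicit in the word ``Hence.'' Your write-up simply spells out what the paper leaves to the reader.
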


\begin{cor}\label{cor:relativesplitting} Let $X$ be a scheme over $\bbQ$. For any $n\neq 0$, the exact sequences $$0\to \Omega^p_{\bbP/X} (n)\to \wt\Omega^p_{\cB/\cO}(n)\to \Omega^{p-1}_{\bbP/X} (n)\to 0$$ split as sheaves of $\cO$-modules (but not as $\cO_\bbP$-modules).
\end{cor}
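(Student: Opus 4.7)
The plan is to extract the splitting directly from the homotopy underlying Proposition \ref{prop:RelativeHomotopTrivial}. Since $X$ is a $\bbQ$-scheme and $n\neq 0$, the element $n$ is invertible, so I set $h=\tfrac{1}{n}\di$. Cartan's formula $i_D\circ \di+\di\circ i_D=n\cdot \mathrm{id}$ on $\wt\Omega^p_{\cB/\cO}(n)$ then reads
\[
i_D\circ h+h\circ i_D=\mathrm{id},
\]
so $h$ is a contracting homotopy for the Koszul complex $\wt\Kos(\cM)(n)$. Crucially, $h$ is only $\cO$-linear, because $\di$ is $\cO$-linear but not $\cB$-linear (equivalently, not $\cO_\bbP$-linear after localization).

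Next I would recall from Theorem \ref{thm:Kostildeacyclic} how the short exact sequence in question is extracted from the acyclic complex $\wt\Kos(\cM)(n)$: one identifies $\Omega^p_{\bbP/X}(n)=\Ker(i_D)\subset \wt\Omega^p_{\cB/\cO}(n)$ and $\Omega^{p-1}_{\bbP/X}(n)=\Ker(i_D)\subset \wt\Omega^{p-1}_{\cB/\cO}(n)$, the latter being the image of $i_D$ from $\wt\Omega^p_{\cB/\cO}(n)$. Define the candidate section as the restriction
\[
s\colon \Omega^{p-1}_{\bbP/X}(n)\;\hookrightarrow\; \wt\Omega^{p-1}_{\cB/\cO}(n)\;\xrightarrow{\;h\;}\;\wt\Omega^{p}_{\cB/\cO}(n).
\]
For $\omega\in \Omega^{p-1}_{\bbP/X}(n)$, i.e.\ $i_D\omega=0$, Cartan's identity gives $i_D(s(\omega))=i_D h(\omega)=\omega-h(i_D\omega)=\omega$. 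Hence $s$ is an $\cO$-linear section of $i_D\colon \wt\Omega^p_{\cB/\cO}(n)\to \Omega^{p-1}_{\bbP/X}(n)$, which provides the desired $\cO$-linear splitting of the short exact sequence.

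The parenthetical remark that the splitting fails to be $\cO_\bbP$-linear is not something to prove rigorously in the statement, but I would observe that it is already visible in the classical case $p=1$, $\cM$ free over a field: then the sequence in question is (a twist of) the standard Euler sequence on $\bbP^r_k$, which is well-known to be non-split as a sequence of $\cO_{\bbP^r}$-modules. No step of this plan presents a real obstacle: once Cartan's formula and the kernel description of $\Omega^p_{\bbP/X}$ from Theorem \ref{thm:Kostildeacyclic} are in hand, the splitting is read off tautologically from $h$.
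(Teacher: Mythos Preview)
Your proposal is correct and follows the same approach as the paper: the corollary is stated immediately after Proposition~\ref{prop:RelativeHomotopTrivial} with no separate proof, precisely because the contracting homotopy $h=\tfrac{1}{n}\di$ furnished by Cartan's formula is the splitting. You have simply made explicit the standard passage from a contracting homotopy on an acyclic complex to $\cO$-linear sections of its factored short exact sequences, which is exactly what the paper leaves implicit.
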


\medskip
\section{Global Euler sequence of a module and Koszul complexes}\label{sec:global}

Assume that $(X,\cO)$ is a $k$-scheme, where $k$ is a ring (just for simplicity, one could assume that $k$ is another scheme). Let $\cM$ be an $\cO$-module and  $\cB=S^\punto \cM$ the symmetric algebra over $\cO$. Instead of considering the module of K\"ahler differentials of $\cB$ over $\cO$, we shall now consider the module of K\"ahler differentials over $k$, that is, $\Omega_{\cB/k}$. As it happened with $\Omega_{\cB/\cO}$ (section \ref{sec:relative}), the module $\Omega_{\cB/k}$ is a graded $\cB$-module in a natural way. The $\cO$-derivation $D\colon \cB\to \cB$ is in particular a $k$-derivation, hence it defines a morphism $i_D\colon \Omega_{\cB/k}\to \cB$, which is nothing  but the composition of the natural morphism $\Omega_{\cB/k}\to\Omega_{\cB/\cO}$ with the inner product $i_D\colon \Omega_{\cB/\cO}\to \cB$ defined in section \ref{sec:relative}.  Again we obtain a complex of $\cB$-modules $(\Omega^\punto_{\cB/k},i_D)$ which we denote by $\Kos(\cM/k)$:

\begin{equation}\label{eq:absoluteKoszul}
\xymatrix@C=17pt{
\cdots\ar[r] & \Omega^{p}_{\cB/k}\ar[r]^(.5){i_D}&\Omega^{p-1}_{\cB/k}\ar[r]^(.6){i_D}&\cdots\ar[r]^(.3){i_D}&\Omega_{\cB/k}\ar[r]^(.55){i_D}& \cB\ar[r]  & 0
}
\end{equation}
and for each $n\geq 0$ a complex of $\cO$-modules
$$
\xymatrix{
\Kos(\cM/k)_n= \cdots \ar[r] & [\Omega^p_{\cB/k}]_n\ar[r]^{i_D} & \cdots\ar[r] & [\Omega_{\cB/k }]_n\ar[r]^{i_D} & S^n\cM \ar[r] & 0
}
$$

By homogeneous localization one has a complex of $\cO_\bbP$-modules, denoted by $\widetilde\Kos(\cM/k)$:

$$
\xymatrix{\cdots \ar[r] & \widetilde\Omega^p_{\cB/k }\ar[r]^{i_D}&\widetilde\Omega^{p-1}_{\cB/k}\ar[r]^{i_D}& \cdots\ar[r]& \widetilde\Omega_{\cB/k}\ar[r]^{i_D}& \cO_\bbP \ar[r] & 0
}
$$

\begin{thm}\label{thm:globalKostildeacyclic} The complex $\widetilde\Kos(\cM/k)$ is acyclic (that is, an exact sequence). Moreover,
$$\Omega^p_{\bbP/k}=\Ker\Big(  \widetilde\Omega^p_{\cB/k }\overset{i_D}\to \widetilde\Omega^{p-1}_{\cB/k }\Big).$$
Hence one has exact sequences
$$0\to\Omega^p_{\bbP/k}\xrightarrow{}\widetilde{\Omega}^p_{\cB/k}\xrightarrow{}\Omega_{\bbP/k}^{p-1}\to 0 $$
and right and left resolutions of $\Omega^p_{\bbP/k}$:
$$0\to\Omega^p_{{\bbP}/k}\xrightarrow{} \widetilde{\Omega}^p_{\cB/k}\xrightarrow{} \widetilde{\Omega}^{p-1}_{\cB/k}\xrightarrow{}\cdots\xrightarrow{} \widetilde{\Omega}_{\cB/k}\xrightarrow{}  \cO_{\bbP}\to 0$$
$$\cdots\to\widetilde{\Omega}^{e}_{\cB/k}\xrightarrow{} \widetilde{\Omega}^{e-1}_{\cB/k}\xrightarrow{}\cdots\xrightarrow{} \widetilde{\Omega}^{p+1}_{\cB/k}\xrightarrow{} \Omega^p_{{\bbP}/k}\xrightarrow{} 0$$

In particular, for $p=1$ the exact sequence
\begin{equation}\label{eq:GlobalEulerSeq}
0\to\Omega_{\bbP/k}\to \widetilde{\Omega}_{\cB/k}\to\cO_{\bbP}\to 0
\end{equation}
is called the (global) Euler sequence.
\end{thm}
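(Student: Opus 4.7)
The plan is to mirror the proof of Theorem \ref{thm:Kostildeacyclic} essentially verbatim, now working with $k$-differentials in place of $\cO$-differentials. The argument splits into two parts: (i) prove acyclicity of $\wt\Kos(\cM/k)$ by the same local-splitting device; (ii) identify $\Ker(\wt\Omega_{\cB/k}\to\cO_\bbP)$ with $\Omega_{\bbP/k}$.

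For (i), surjectivity of $\wt\Omega_{\cB/k}\to\cO_\bbP$ follows because this map factors as $\wt\Omega_{\cB/k}\twoheadrightarrow\wt\Omega_{\cB/\cO}\twoheadrightarrow\cO_\bbP$, where the first surjection comes from the (right-exact) fundamental sequence for $k\to\cO\to\cB$ and the second is the one from Theorem \ref{thm:Kostildeacyclic}. Setting $K=\Ker(\wt\Omega_{\cB/k}\to\cO_\bbP)$, the short exact sequence $0\to K\to\wt\Omega_{\cB/k}\to\cO_\bbP\to 0$ splits locally since $\cO_\bbP$ is locally free, so taking exterior powers produces short exact sequences $0\to\Lambda^p K\to\wt\Omega^p_{\cB/k}\to\Lambda^{p-1}K\to 0$ whose splice is precisely $\wt\Kos(\cM/k)$; this proves its acyclicity.

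For (ii), I would copy the local construction from the relative case: assume $X=\Spec A$, and for each $b\in\cB_1$ use the inclusion $\cB_{(b)}\hookrightarrow\cB_b$ to induce $\Omega_{\cB_{(b)}/k}\to(\Omega_{\cB/k})_b$, whose image lies in the degree-$0$ part $(\Omega_{\cB/k})_{(b)}$. Gluing yields a morphism $f\colon\Omega_{\bbP/k}\to\wt\Omega_{\cB/k}$, injective via the same retract $c_n/b^\ell\mapsto c_n/b^n$, which is $k$-linear and hence induces a retract of the $k$-differentials. Vanishing of $i_D\circ f$ reduces to the identical computation as before, using in addition that $i_D\,\di a=D(a)=0$ for $a\in A$ since $D$ is an $A$-derivation. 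To show $K\subseteq\Img f$, one verifies that the image of $i_D\colon\wt\Omega^2_{\cB/k}\to\wt\Omega_{\cB/k}$ is contained in $\Omega_{\bbP/k}$ via the very same local identity
$$i_D\left(\frac{\di c_p\wedge \di c_q}{b^{p+q}}\right)=p\,\frac{c_p}{b^p}\di\!\left(\frac{c_q}{b^q}\right)-q\,\frac{c_q}{b^q}\di\!\left(\frac{c_p}{b^p}\right),$$
whose validity carries over word-for-word to $k$-differentials.

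The main conceptual point to watch is that $\wt\Omega_{\cB/k}$ carries extra components from $\pi^*\Omega_{X/k}$ that are absent in the relative case; these get absorbed into $\Omega_{\bbP/k}$ because of the fundamental exact sequence $0\to\pi^*\Omega_{X/k}\to\Omega_{\bbP/k}\to\Omega_{\bbP/X}\to 0$. The crucial observation making the bookkeeping work is that the $A$-derivation $D$ annihilates $\di A\subset\Omega_{\cB/k}$, so the $\Omega_{X/k}$-part of $\wt\Omega_{\cB/k}$ sits automatically inside $\Ker\,i_D$, i.e.\ inside $\Omega_{\bbP/k}$. Once $K=\Omega_{\bbP/k}$ is established, the stated right and left resolutions and the global Euler sequence (\ref{eq:GlobalEulerSeq}) are formal consequences of splicing the $\Lambda^p$-sequences from step (i).
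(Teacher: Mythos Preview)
Your proposal is correct and follows exactly the approach the paper intends: the paper's own proof simply reads ``It is completely analogous to the proof of Theorem \ref{thm:Kostildeacyclic},'' and you have faithfully unpacked that analogy, correctly flagging the one new bookkeeping point (that $i_D$ kills $\di A$ because $D$ vanishes in degree $0$). Note that your displayed identity already covers the mixed terms $\di a\wedge\di c_q$ by taking $p=0$, so the verification that $\Img(i_D)\subseteq\Omega_{\bbP/k}$ really does go through as written.
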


\begin{proof} It is completely analogous to the proof of Theorem \ref{thm:Kostildeacyclic}.
\end{proof}

Let us  denote $\widetilde\Kos(\cM/k)_n :=\pi_*(\widetilde\Kos(\cM/k)(n))$. The natural morphisms
$$[\Omega^{p}_{\cB/k}]_n\to \pi_*\big(\widetilde{\Omega}^{p}_{\cB/k} (n)\big)$$
give a morphism of complexes
$$ \Kos(\cM/k)_n \to \widetilde\Kos(\cM/k)_n .$$

In complete analogy to the relative setting we have the following:
\begin{thm}\label{thm:n-globalKoszulacyclic} Let  $\cM$ be a finitely generated quasi-coherent module on a scheme $(X,\cO)$, $\cB= S^\punto \cM$, $\bbP=\Proj\cB $ and $\pi\colon \bbP\to X$ the natural morphism. Let $d'$ be the greatest integer such that $\Omega^{d'}_{\cB/k}\neq 0$ and $n> 0$. Then:
\begin{enumerate}
\item If $ R^j \pi_* (\widetilde{\Omega}^{i}_{\cB/k} (n)) =0$ for any $j>0$ and any $0\leq i\leq d'$, then $\widetilde\Kos(\cM/k)_n $ is acyclic.

 \item If (1) holds and the natural morphism $[\Omega^{i}_{\cB/k}]_n\to \pi_*\big(\widetilde{\Omega}^{i}_{\cB/k} (n)\big)$ is an isomorphism for any $0\leq i\leq d'$, then $\Kos(\cM/k)_n $ is also acyclic.
 \end{enumerate}
\end{thm}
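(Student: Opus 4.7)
The plan is to imitate the proof of Theorem \ref{thm:n-Koszulacyclic} step by step, substituting $\Omega^\punto_{\cB/k}$ for $\Omega^\punto_{\cB/\cO}$ throughout and invoking the global acyclicity theorem (Theorem \ref{thm:globalKostildeacyclic}) in place of Theorem \ref{thm:Kostildeacyclic}. No new ideas are needed: the earlier argument only used the acyclicity of the associated twisted Koszul complex and the existence of the short exact sequences $0\to \Omega^p_{\bbP/X}\to \wt\Omega^p_{\cB/\cO}\to \Omega^{p-1}_{\bbP/X}\to 0$, both of which are now available in the global (absolute) setting.

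For part (1), I would start from the fact that $\wt\Kos(\cM/k)$ is acyclic by Theorem \ref{thm:globalKostildeacyclic}. Since the differentials $i_D$ are $\cO_\bbP$-linear and $\cO_\bbP(n)$ is invertible, tensoring with $\cO_\bbP(n)$ preserves exactness, so $\wt\Kos(\cM/k)(n)$ is exact with (nonzero) terms $\wt\Omega^i_{\cB/k}(n)$ for $0\leq i\leq d'$ together with $\cO_\bbP(n)$ at the right. I would then split it into the short exact sequences
\[ 0\to \Omega^p_{\bbP/k}(n)\to \wt\Omega^p_{\cB/k}(n)\to \Omega^{p-1}_{\bbP/k}(n)\to 0\]
obtained by twisting the short exact sequences of Theorem \ref{thm:globalKostildeacyclic} by $\cO_\bbP(n)$. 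A descending induction on $p$ using the long exact sequence of $R^j\pi_*$, together with the hypothesis $R^j\pi_*\wt\Omega^i_{\cB/k}(n)=0$ for $j>0$, yields $R^j\pi_*\Omega^p_{\bbP/k}(n)=0$ for all $j>0$ and all relevant $p$. Consequently $\pi_*$ preserves each of the short exact sequences above, and splicing them back gives the acyclicity of $\wt\Kos(\cM/k)_n=\pi_*(\wt\Kos(\cM/k)(n))$. For part (2), the additional hypothesis says precisely that the morphism of complexes $\Kos(\cM/k)_n\to \wt\Kos(\cM/k)_n$ is a termwise isomorphism, hence an isomorphism of complexes; the acyclicity established in (1) then transfers immediately to $\Kos(\cM/k)_n$.

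The only slightly delicate point is the propagation of the vanishing $R^j\pi_*\wt\Omega^i_{\cB/k}(n)=0$ down to the kernels $\Omega^p_{\bbP/k}(n)$. This is the same bookkeeping already performed tacitly in Theorem \ref{thm:n-Koszulacyclic}, and because the complex is bounded on the left by $\wt\Omega^{d'}_{\cB/k}(n)$ (by the very choice of $d'$), the descending induction terminates cleanly and requires no further input. All remaining steps are formal, so no genuine obstacle is expected.
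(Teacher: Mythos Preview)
Your proposal is correct and follows exactly the route the paper intends: the paper gives no separate proof of this theorem, prefacing it only with ``In complete analogy to the relative setting'', which is precisely your strategy of replaying the proof of Theorem~\ref{thm:n-Koszulacyclic} with $\Omega^\punto_{\cB/k}$ in place of $\Omega^\punto_{\cB/\cO}$ and Theorem~\ref{thm:globalKostildeacyclic} in place of Theorem~\ref{thm:Kostildeacyclic}. Your version is in fact more detailed than the paper's terse argument for Theorem~\ref{thm:n-Koszulacyclic}, spelling out the descending induction that shows $\pi_*$ preserves exactness when the terms are $\pi_*$-acyclic.
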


\begin{thm} Let $X$ be a noetherian scheme and $\cM$ a coherent module on $X$. The Koszul complexes $\Kos(\cM/k)_n$ and $\widetilde\Kos(\cM/k)_n  $ are acyclic for $n>>0$.
\end{thm}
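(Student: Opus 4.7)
The plan is to argue exactly as in Theorem~\ref{thm:Koszulacyclic-n>>0}: apply Theorem~\ref{thm:n-globalKoszulacyclic} and verify that its two hypotheses hold for $n\gg 0$. Concretely, I need the vanishing $R^j\pi_*(\wt\Omega^i_{\cB/k}(n))=0$ for $j>0$ and $0\le i\le d'$, together with the isomorphism $[\Omega^i_{\cB/k}]_n\iso \pi_*(\wt\Omega^i_{\cB/k}(n))$ in the same range. Both are Serre-type statements on the projective morphism $\pi\colon\bbP\to X$ and should follow from \cite[Theorem 2.2.1]{EGAIII-1} and \cite[Section 3.3 and Section 3.4]{EGAII} in complete parallel with the relative case.

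To make those EGA citations applicable I would first check that the sheaves $\wt\Omega^i_{\cB/k}$ are coherent on $\bbP$. For this I would use the split short exact sequence
$$0\to\cB\otimes_\cO\Omega_{\cO/k}\to\Omega_{\cB/k}\to\Omega_{\cB/\cO}\to 0,$$
whose splitting comes from the retraction $\cB\to\cO$ onto degree zero. Taking exterior powers and applying Theorem~\ref{thm:relativediff} on the right-hand factor exhibits $\Omega^i_{\cB/k}$ as a direct sum, over $a+b=i$, of the graded $\cB$-modules $\cB\otimes_\cO(\Lambda^a\Omega_{\cO/k}\otimes_\cO\Lambda^b\cM)[-b]$; after homogeneous localization these become $\pi^*(\Lambda^a\Omega_{\cO/k}\otimes\Lambda^b\cM)(-b)$ on $\bbP$, and the problem is reduced to a collection of Serre-vanishing/comparison statements for sheaves of the form $\pi^*\cL(m)$.

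The main obstacle is that nothing in the hypotheses forces $\Omega_{\cO/k}$ to be coherent. In the usual situation where $X$ is of finite type over $k$ it is, and the argument goes through verbatim as in Theorem~\ref{thm:Koszulacyclic-n>>0}. In general I would proceed summand by summand: by the projection formula it suffices to verify, for each quasi-coherent $\cL$ on $X$ and each fixed shift $-b$, that $R^j\pi_*(\pi^*\cL(n-b))\cong \cL\otimes_\cO R^j\pi_*\cO_\bbP(n-b)$ vanishes for $j>0$ and that $\pi_*(\pi^*\cL(n-b))\cong \cL\otimes_\cO\cB_{n-b}$ for $n\gg 0$. Both reduce to the corresponding statements for $\cO_\bbP$ on $\bbP=\Proj S^\punto\cM$ with $\cM$ coherent on the noetherian $X$, which is exactly Serre vanishing plus EGA II 3.3--3.4, and the theorem follows.
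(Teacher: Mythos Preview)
Your overall plan---invoke Theorem~\ref{thm:n-globalKoszulacyclic} and verify its hypotheses for $n\gg 0$ via the Serre-type results of \cite{EGAIII-1} and \cite{EGAII}---is exactly the paper's intended argument (the paper gives no separate proof and simply parallels Theorem~\ref{thm:Koszulacyclic-n>>0}). The problem is the auxiliary step you introduce to justify coherence.

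The claimed splitting of
\[
0\to \cB\otimes_\cO\Omega_{\cO/k}\to\Omega_{\cB/k}\to\Omega_{\cB/\cO}\to 0
\]
is false. The retraction $\cB\to\cO$ onto degree zero is an $\cO$-algebra map, but it does not produce a $\cB$-linear retraction of the left-hand map. Concretely, a $\cB$-linear splitting would have to send $d_k a\mapsto 1\otimes d a$ and $d_k m\mapsto 0$, but then the relation $d_k(am)=a\,d_k m+m\,d_k a$ in $\Omega_{\cB/k}$ forces $0=m\otimes da$, which fails. The paper itself notes that the degree-$1$ part of this sequence is the Atiyah extension of $\cE$, whose class is generally nonzero; and in Section~\ref{sec:relativeBott} the sequence is only asserted to split \emph{locally} on $X$ when $\cE$ is locally trivial. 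So your direct-sum decomposition of $\wt\Omega^i_{\cB/k}$ is unavailable. Your fallback via the projection formula is also shaky: for $\pi\colon\Proj S^\punto\cM\to X$ with $\cM$ merely coherent, neither $\pi$ nor $R^j\pi_*\cO_\bbP(m)$ need be flat, so the identification $R^j\pi_*(\pi^*\cL(m))\simeq\cL\otimes R^j\pi_*\cO_\bbP(m)$ for an arbitrary quasi-coherent $\cL$ is not justified.

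The repair is simpler than either detour. If $\Omega_{X/k}$ is coherent (e.g.\ $X$ locally of finite type over $k$), then $\Omega_{\cB/k}$ is generated as a $\cB$-module by $d_k\cO$ in degree $0$ and $d_k\cM$ in degree $1$, hence is a finitely generated graded $\cB$-module; the same holds for each $\Omega^i_{\cB/k}$. Then $\wt\Omega^i_{\cB/k}$ is coherent on $\bbP$ and the EGA citations apply verbatim---no splitting needed. You correctly flagged that without some such hypothesis on $\Omega_{X/k}$ the statement needs care (indeed $d'$ could be infinite and $\Kos(\cM/k)$ unbounded); the paper is tacitly assuming this finiteness.
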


\subsection{Koszul versus De Rham (Global case)}\label{sec:DeRhamGlobal}
Now we pass to the De Rham complex (over $k$). The $k$-linear differentials
$$\di\colon \Omega^p_{\cB/k}\to \Omega^{p+1}_{\cB/k}$$
give a (global) De Rham complex
$$\DeRham(\cM/k)\equiv 0\to \cB\overset\di \to \Omega_{\cB/k}\overset\di \to\cdots\overset\di\to\Omega^{p-1}_{\cB/k}\overset\di\to \Omega^{p}_{\cB/k}\to \cdots$$ which is bounded if $X$ is of finite type over $k$. Since $\di$ is homogeneous of degree $0$, one has for each $n\geq 0$ a complex of $\cO$-modules (with $k$-linear differential)
$$ \DeRham(\cM/k)_n\equiv 0\to S^n\cM\overset\di \to [\Omega_{\cB/k}]_n\overset\di \to\cdots\overset\di\to  [\Omega^{p}_{\cB/k}]_n\to \cdots.$$
One has again Cartan's formula: $i_D\circ\di +\di\circ i_D=$ multiplication by $n$, on $[\Omega^p_{\cB/k}]_n$ and then:

\begin{prop} If $X$ is a scheme over $\bbQ$, then $\Kos(\cM/k)_n$ and $\DeRham(\cM/k)_n$ are homotopically trivial (in particular, acyclic) for any $n>0$.
\end{prop}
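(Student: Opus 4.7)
The plan is a direct application of Cartan's formula, which the paper has just recorded: on the degree-$n$ piece $[\Omega^p_{\cB/k}]_n$ one has $i_D\circ\di+\di\circ i_D=n\cdot\mathrm{id}$. Since $X$ is a scheme over $\bbQ$ and $n>0$, the integer $n$ is invertible in $\cO_X$, so multiplication by $n$ is an automorphism of every term of both complexes. This is exactly the setup in which Cartan's identity yields an explicit contracting homotopy, mirroring Proposition~\ref{prop:RelativeHomotopTrivial} in the relative setting.

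Concretely, for the Koszul complex $\Kos(\cM/k)_n$ (whose differential is $i_D$, of degree $-1$), I would set $h:=\tfrac{1}{n}\di$, a degree $+1$ map from $[\Omega^{p-1}_{\cB/k}]_n$ to $[\Omega^{p}_{\cB/k}]_n$. Cartan's formula, divided by $n$, gives
\[
i_D\circ h+h\circ i_D=\mathrm{id},
\]
so $h$ witnesses $\mathrm{id}\simeq 0$ and the complex is homotopically trivial. For the De Rham complex $\DeRham(\cM/k)_n$ (whose differential is $\di$, of degree $+1$), I would symmetrically set $h':=\tfrac{1}{n}i_D$, a degree $-1$ map; the same identity reads $\di\circ h'+h'\circ\di=\mathrm{id}$, so $h'$ is a contracting homotopy. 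Acyclicity is then immediate, since $\mathrm{id}_C\simeq 0$ forces $H(C)=0$.

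There is essentially no obstacle; the only subtlety worth noting is that in the global (over $k$) setting the exterior differential $\di$ is $k$-linear but not $\cO$-linear, so the homotopy $h$ for $\Kos(\cM/k)_n$ and the homotopy $h'$ for $\DeRham(\cM/k)_n$ are morphisms of sheaves of $k$-modules rather than of $\cO$-modules. This does not affect the conclusion: homotopical triviality at the level of sheaves of $k$-modules (equivalently, of abelian sheaves) is precisely what the statement asserts, and acyclicity is a property of the underlying complex of sheaves.
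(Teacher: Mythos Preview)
Your argument is correct and is exactly the one the paper intends: the proposition is stated immediately after Cartan's formula $i_D\circ\di+\di\circ i_D=n\cdot\mathrm{id}$ on $[\Omega^p_{\cB/k}]_n$, and the proof is nothing more than dividing by $n$ to obtain the contracting homotopies you wrote down. Your remark that the homotopy is only $k$-linear (not $\cO$-linear) is apt and consistent with the paper's later observation in Corollary~\ref{cor:globalsplitting} that the resulting splittings are of $k$-modules only.
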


As in section \ref{sec:DeRham}, we can take homogeneous localizations: for each $n\in\bbZ$, the differentials $\Omega^p_{\cB/k}\to \Omega^{p+1}_{\cB/k}$ induce $k$-linear morphisms
$$\di\colon \widetilde\Omega^p_{\cB/k} (n) \to \widetilde\Omega^{p+1}_{\cB/k} (n)$$
and one obtains a complex of $\cO_\bbP$-modules (with $k$-linear differential)
$$\widetilde\DeRham(\cM/k,n) = 0\to \cO_\bbP(n)\overset\di \to \widetilde\Omega_{\cB/k} (n)\overset\di \to\cdots\overset\di\to \widetilde\Omega^{p}_{\cB/k} (n)\to \cdots$$

Again, the differentials of Koszul and De Rham complexes are related by Cartan's formula: $i_D\circ\di+ \di\circ i_D=$ multiplication by $n$, on $\widetilde\Omega^{p}_{\cB/k} (n)$, so one has:

\begin{prop} Let $X$ be a scheme over $\bbQ$. The complexes $\widetilde\Kos(\cM/k) (n)$ and $\widetilde\DeRham(\cM/k,n) $ are homotopically trivial (in particular, acyclic) for any $n\neq 0$.
\end{prop}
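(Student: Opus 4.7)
The proof is essentially a packaging of Cartan's formula, which the paper has just established in the form
\[ i_D\circ \di + \di\circ i_D = n\cdot\mathrm{id}\quad \text{on }\widetilde\Omega^p_{\cB/k}(n).\]
The plan is to observe that, since $X$ is a $\bbQ$-scheme and $n\neq 0$, the scalar $n$ acts invertibly on every $\cO$-module appearing in either complex, so the identity above literally defines a contracting homotopy.

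First I would fix $n\neq 0$ and consider $\widetilde\Kos(\cM/k)(n)$, whose differential is $i_D\colon \widetilde\Omega^p_{\cB/k}(n)\to \widetilde\Omega^{p-1}_{\cB/k}(n)$. Because $X$ is a $\bbQ$-scheme, $\tfrac{1}{n}$ makes sense in $\Gamma(X,\cO)$ and therefore defines an $\cO$-linear automorphism of each $\widetilde\Omega^p_{\cB/k}(n)$. I would then set
\[ h_p := \tfrac{1}{n}\,\di\colon \widetilde\Omega^p_{\cB/k}(n)\longrightarrow \widetilde\Omega^{p+1}_{\cB/k}(n),\]
a $k$-linear map going against the Koszul differential. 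Cartan's formula instantly yields
\[ i_D\circ h_p + h_{p-1}\circ i_D = \tfrac{1}{n}\bigl( i_D\circ \di + \di\circ i_D\bigr) = \mathrm{id},\]
so $\{h_p\}$ is a contracting homotopy and $\widetilde\Kos(\cM/k)(n)$ is null-homotopic.

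For $\widetilde\DeRham(\cM/k,n)$ the argument is symmetric: the differential is now $\di$, and I would define
\[ h'_p := \tfrac{1}{n}\,i_D\colon \widetilde\Omega^p_{\cB/k}(n)\longrightarrow \widetilde\Omega^{p-1}_{\cB/k}(n),\]
which again lands in the same graded pieces. The same Cartan identity reads $\di\circ h'_p + h'_{p+1}\circ \di = \mathrm{id}$, so this complex is also null-homotopic. Both complexes are therefore homotopically trivial, hence acyclic.

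The only point to check is that the two operators $i_D$ and $\di$ really do act on the \emph{same} sheaves $\widetilde\Omega^p_{\cB/k}(n)$ after homogeneous localization, and that Cartan's formula, which is a pointwise identity of $k$-linear operators on $\Omega^p_{\cB/k}$, descends to these localizations. Both facts have already been settled in the preceding discussion (the compatibility of $\di$ with homogeneous localization is verified exactly as in \S\ref{sec:DeRham}, and $i_D$ is $\cO_\bbP$-linear), so no new computation is required. Consequently there is no real obstacle: once Cartan's formula and the invertibility of $n$ over a $\bbQ$-base are in place, homotopy triviality of both $\widetilde\Kos(\cM/k)(n)$ and $\widetilde\DeRham(\cM/k,n)$ for $n\neq 0$ is immediate.
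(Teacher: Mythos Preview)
Your argument is correct and is exactly the approach the paper takes: the proposition is stated immediately after Cartan's formula $i_D\circ\di+\di\circ i_D=n$ on $\widetilde\Omega^p_{\cB/k}(n)$, and the paper gives no further proof because, as you spell out, dividing by the invertible scalar $n$ turns $\di$ (resp.\ $i_D$) into a contracting homotopy for the Koszul (resp.\ De Rham) complex. Your write-up simply makes explicit what the paper leaves implicit.
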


\begin{cor}\label{cor:globalsplitting}
If $X$ is a scheme over $\bbQ$, then for any $n\neq 0$, the exact sequences
$$0\to \Omega^p_{\bbP/k} (n)\to \widetilde\Omega^p_{\cB/k} (n)\to \Omega^{p-1}_{\bbP/k} (n)\to 0$$
 split as sheaves of $k$-modules (but not as $\cO_\bbP$-modules).
\end{cor}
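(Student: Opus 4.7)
The plan is to mimic the proof of Corollary \ref{cor:relativesplitting} in the global setting, using the preceding proposition which asserts that $\widetilde\Kos(\cM/k)(n)$ is homotopically trivial as a complex of sheaves of $k$-modules whenever $n\neq 0$. The short exact sequences in question are precisely the Yoneda pieces of the acyclic complex $\widetilde\Kos(\cM/k)(n)$ coming from Theorem \ref{thm:globalKostildeacyclic}, where $\Omega^p_{\bbP/k}(n)=\Ker(i_D)=\Img(i_D)$ sits as the $p$-th cycle sheaf. Since homotopy triviality is preserved under passage to cycles and boundaries, each such short exact sequence will split in the category of sheaves of $k$-modules.

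Concretely, I would exhibit the splitting via Cartan's formula $i_D\circ \di+\di\circ i_D = n\cdot \mathrm{id}$ on $\widetilde\Omega^p_{\cB/k}(n)$. Since $X$ is a $\bbQ$-scheme and $n\neq 0$, the operator $h=\tfrac1n\,\di$ makes sense and is $k$-linear (but not $\cO_\bbP$-linear, because $\di$ is only a $k$-derivation). First I would define the candidate section
\[ s\colon \Omega^{p-1}_{\bbP/k}(n)\to \widetilde\Omega^p_{\cB/k}(n),\qquad s(\eta)=\tfrac1n\,\di\eta,\]
and check that for $\eta\in\Omega^{p-1}_{\bbP/k}(n)=\Ker(i_D)$ one has
\[ i_D(s(\eta))=\tfrac1n\,i_D\di\eta=\tfrac1n\bigl(n\eta-\di\,i_D\eta\bigr)=\eta,\]
so $s$ is a $k$-linear section of the surjection $i_D\colon \widetilde\Omega^p_{\cB/k}(n)\to \Omega^{p-1}_{\bbP/k}(n)$. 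Alternatively (and equivalently) I would define the $k$-linear retraction
\[ r\colon \widetilde\Omega^p_{\cB/k}(n)\to \Omega^p_{\bbP/k}(n),\qquad r(\omega)=\omega-\tfrac1n\,\di\,i_D\omega,\]
and verify, using $i_D^2=0$ and Cartan's formula, that $i_D\circ r=0$ and $r|_{\Omega^p_{\bbP/k}(n)}=\mathrm{id}$.

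The only thing to be a little careful about is that the map $s$ (equivalently $r$) really does land in $\widetilde\Omega^p_{\cB/k}(n)$ (not in some other twist) and is well-defined after homogeneous localization; this was already established in the discussion preceding the proposition, where $\di$ was shown to be compatible with homogeneous localization and to preserve the degree in the grading that defines the twist $\cO_\bbP(n)$. Since $\di$ is $k$-linear and commutes with restriction to open subsets of $\bbP$, the local formulas patch to give a morphism of sheaves of $k$-modules on $\bbP$. Finally I would remark, as in Corollary \ref{cor:relativesplitting}, that no $\cO_\bbP$-linear splitting can exist in general (for $p=1$ this is the non-triviality of the global Euler extension (\ref{eq:GlobalEulerSeq}) twisted appropriately), which is why the splitting is only obtained in the category of sheaves of $k$-modules. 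There is no real obstacle here: the argument is a direct transcription of the relative case, the main (and only nontrivial) input being the already-established Cartan formula together with the invertibility of $n$ under the $\bbQ$-scheme hypothesis.
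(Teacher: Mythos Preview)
Your proposal is correct and follows exactly the approach the paper intends: the corollary is stated immediately after the proposition asserting homotopy triviality of $\widetilde\Kos(\cM/k)(n)$ via Cartan's formula, and the paper gives no separate proof because the splitting is the direct consequence you spell out (using $\tfrac1n\di$ as the $k$-linear section, just as in Corollary~\ref{cor:relativesplitting}). Your explicit verification that $s$ lands in the right sheaf and that $\di$ is compatible with homogeneous localization simply makes explicit what the paper leaves to the reader.
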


\section{Cohomology of projective bundles}\label{sec:relativeBott}

In this section we assume that $\cE$ is a locally free sheaf of rank $r+1$ on a $k$-scheme $(X,\cO)$. Let $\cB=S^\punto\cE$ be its symmetric algebra over $\cO$ and $\bbP= \Proj \cB\xrightarrow{\pi} X$ the corresponding projective bundle. Our aim is to determine the cohomology of  the sheaves $\Omega^p_{\bbP/X}(n)$ and $\Omega^p_{\bbP/k}(n)$.

\subsection{Cohomology of $\Omega^p_{\bbP/X} (n)$} $\,$
\medskip

\noindent{\ \ \bf Notations:} In order to simplify some statements, we shall use the following conventions:
 \begin{enumerate}

 \item $S^p\cE=0$ whenever $p<0$, and analogously for exterior powers.

 \item For any integer $p$, we shall denote $\bar p= r+1-p$.

 \item For any $\cO$-module $\cM$, we shall denote by $\cM^*$ its dual: $\cM^*=\SHom(\cM,\cO)$.
 \end{enumerate}

We shall use the following well known result on the cohomology of a projective bundle:

\begin{prop}\label{basics} Let $n$ be a non negative integer. Then
\[ R^i\pi_*\cO_\bbP(n)=\left\{\begin{array}{ll} 0 &\text{ for }\ \ i\neq 0\\ S^n\cE &\text{ for } \ \ i=0
\end{array}\right.\] If $n$ is a positive integer, then
\[ R^i\pi_*\cO_\bbP(-n)=\left\{\begin{array}{ll} 0 &\text{ for } \ \ i\neq r\\ S^{n-r-1}\cE^*\otimes \Lambda^{r+1}\cE^* &\text{ for }\ \ i=r
\end{array}\right.\]
\end{prop}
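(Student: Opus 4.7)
The plan is to reduce the problem to ordinary projective space over an affine base and then invoke the classical Čech-cohomology computation. Since higher direct images commute with flat base change and both sides of the asserted equalities are quasi-coherent $\cO_X$-modules, the question is local on $X$; we may therefore assume $X=\Spec A$ is affine and $\cE\simeq\cO^{r+1}$ is free, so that $\bbP\simeq\bbP^r_A=\Proj A[x_0,\dots,x_r]$.

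On $\bbP^r_A$, I would take the standard affine cover $U_i=D_+(x_i)$ and inspect the alternating Čech complex of $\cO_\bbP(n)$. The computation is classical: $H^i$ vanishes for $0<i<r$, while in the extremal degrees one obtains free $A$-modules with explicit monomial bases. For $n\geq 0$ one reads off $H^0=A[x_0,\dots,x_r]_n\simeq S^n(A^{r+1})$ and $H^r=0$. For $n\geq r+1$, $H^r(\bbP^r_A,\cO(-n))$ has basis given by the Laurent monomials $x_0^{a_0}\cdots x_r^{a_r}$ with $\sum a_i=-n$ and every $a_i\leq -1$; the substitution $a_i=-1-b_i$ puts this basis in bijection with the monomial basis of $S^{n-r-1}((A^{r+1})^*)\otimes\Lambda^{r+1}((A^{r+1})^*)$.

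The remaining point is to check that the local identifications are natural in $\cE$, so that they globalise correctly. For $R^0\pi_*$ this is the tautology that the degree-$n$ homogeneous piece of $\cB=S^\punto\cE$ is $S^n\cE$. For $R^r\pi_*$ the slickest route is relative Serre duality: the Euler sequence $0\to\Omega_{\bbP/X}\to\pi^*\cE(-1)\to\cO_\bbP\to 0$ of Theorem~\ref{thm:Kostildeacyclic} yields, upon taking top exterior powers, $\omega_{\bbP/X}=\pi^*(\det\cE)\otimes\cO_\bbP(-r-1)$, so
\[
R^r\pi_*\cO_\bbP(-n)\simeq\bigl(\pi_*(\cO_\bbP(n)\otimes\omega_{\bbP/X})\bigr)^*=\bigl(\det\cE\otimes S^{n-r-1}\cE\bigr)^*=S^{n-r-1}\cE^*\otimes\Lambda^{r+1}\cE^*,
\]
which also makes the vanishing $R^r\pi_*\cO_\bbP(-n)=0$ for $1\leq n\leq r$ transparent (the symmetric power on the right vanishes). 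The anticipated obstacle is precisely this naturality/duality step: tracking how the top Čech class $1/(x_0\cdots x_r)$ transforms under a change of basis of $\cE$ is what produces the $\Lambda^{r+1}\cE^*$ factor, and invoking relative Serre duality is the cleanest way to sidestep a direct cocycle computation.
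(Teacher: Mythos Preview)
The paper does not prove this proposition at all: it is introduced with the sentence ``We shall use the following well known result on the cohomology of a projective bundle'' and then used as a black box. Your argument is the standard one (local reduction to $\bbP^r_A$, \v Cech computation on the standard cover, then relative Serre duality to globalise the top-degree identification), and it is correct. There is no circularity in your appeal to Theorem~\ref{thm:Kostildeacyclic} for $\omega_{\bbP/X}$, since that theorem is proved in the paper without reference to the present proposition; likewise, your use of duality for the $R^r$-term is legitimate because the vanishing of the intermediate $R^i$ has already been established by the local \v Cech step.
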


We shall also use without further explanation a particular case of projection formula: for any quasi-coherent module $\cN$ on $X$ and any locally free module $\cL$ on $\bbP$ such that $R^j\pi_*\cL$ is locally free (for any $j$),  one has $$R^i\pi_*(\pi^*\cN \otimes \cL)=\cN\otimes R^i\pi_*\cL .$$

\begin{prop}\label{relativedifferentials} Let $n$ be a non negative integer. Then
\[ R^i\pi_*\wt\Omega^p_{\cB/\cO} (n)=\left\{\begin{array}{ll} 0  &\text{ for }\ \ i\neq 0\\ \Lambda^p\cE\otimes S^{n-p}\cE &\text{ for }\ \ i=0
\end{array}\right.  \] For any positive integer $n$, one has
\[ R^i\pi_*\wt\Omega^p_{\cB/\cO} (-n)=\left\{\begin{array}{ll} 0\qquad &\text{ for }\ \ i\neq r\\  \Lambda^{\bar p}\cE^*\otimes S^{n -\bar p}\cE^* & \text{ for } \ \ i=r \ \ \text{ with }\ \ \bar p=r+1-p
\end{array}\right.\]
\end{prop}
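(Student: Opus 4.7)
The plan is to reduce everything to the cohomology of the line bundles $\cO_\bbP(n-p)$ computed in Proposition~\ref{basics}, via Theorem~\ref{thm:relativediff} and the projection formula. By Theorem~\ref{thm:relativediff}, $\Omega^p_{\cB/\cO} \simeq \Lambda^p\cE \otimes_\cO \cB[-p]$ as graded $\cB$-modules; applying homogeneous localization together with the stated identification $\wt{(\cL\otimes_\cO \cB[r])} = (\pi^*\cL)(r)$ yields $\wt\Omega^p_{\cB/\cO} \simeq \pi^*(\Lambda^p\cE) \otimes \cO_\bbP(-p)$, so that after twisting
\[\wt\Omega^p_{\cB/\cO}(n) \simeq \pi^*(\Lambda^p\cE) \otimes \cO_\bbP(n-p).\]
Since $\Lambda^p\cE$ is locally free on $X$ and each $R^j\pi_*\cO_\bbP(n-p)$ is locally free by Proposition~\ref{basics}, the projection formula (as recalled just before the statement) delivers
\[R^i\pi_*\wt\Omega^p_{\cB/\cO}(n) \simeq \Lambda^p\cE \otimes R^i\pi_*\cO_\bbP(n-p),\]
which reduces the computation to Proposition~\ref{basics} applied to the twist $n-p$.

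For the first formula I substitute $n \geq 0$. When $n \geq p$, the exponent $n-p$ is non-negative, so the first clause of Proposition~\ref{basics} gives $R^0\pi_*\cO_\bbP(n-p) = S^{n-p}\cE$ with vanishing higher direct images, recovering $\Lambda^p\cE \otimes S^{n-p}\cE$ at $i=0$. When $0 \leq n < p$ with $p \leq r$, the exponent $n-p$ is strictly negative of absolute value at most $r$, so the second clause of Proposition~\ref{basics} forces the vanishing of all $R^i\pi_*\cO_\bbP(n-p)$, which matches the convention $S^{n-p}\cE = 0$ adopted at the start of the section.

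For the second formula I set the twist to $-(n+p)$ with $n > 0$, which is strictly negative, so the second clause of Proposition~\ref{basics} yields $R^r\pi_*\cO_\bbP(-(n+p)) = S^{n+p-r-1}\cE^* \otimes \Lambda^{r+1}\cE^*$ concentrated in degree $r$ (zero otherwise, with the convention $S^m = 0$ for $m<0$). The one genuine ingredient beyond substitution is the standard contraction isomorphism $\Lambda^p\cE \otimes \Lambda^{r+1}\cE^* \simeq \Lambda^{r+1-p}\cE^*$ for $\cE$ locally free of rank $r+1$: applying it, together with $r+1-p = \bar p$ and $n+p-r-1 = n - \bar p$, converts $\Lambda^p\cE \otimes S^{n+p-r-1}\cE^* \otimes \Lambda^{r+1}\cE^*$ into the claimed $\Lambda^{\bar p}\cE^* \otimes S^{n-\bar p}\cE^*$. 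The only hurdle is bookkeeping the sign conventions $S^m = 0$, $\Lambda^m = 0$ for $m<0$ uniformly across the cases; everything else is a mechanical combination of Theorem~\ref{thm:relativediff}, the projection formula, and Proposition~\ref{basics}.
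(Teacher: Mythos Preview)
Your proof is correct and follows essentially the same route as the paper's own argument: both identify $\wt\Omega^p_{\cB/\cO}$ with $(\pi^*\Lambda^p\cE)(-p)$ via Theorem~\ref{thm:relativediff}, apply the projection formula together with Proposition~\ref{basics}, and for the negative twist invoke the contraction isomorphism $\Lambda^p\cE\otimes\Lambda^{r+1}\cE^*\simeq\Lambda^{\bar p}\cE^*$ (equivalently, $\Lambda^{\bar p}\cE\simeq\Lambda^p\cE^*\otimes\Lambda^{r+1}\cE$). Your write-up is simply more explicit about the case distinctions and the role of the sign conventions $S^m=0$ for $m<0$.
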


\begin{proof}  Since $\wt\Omega^p_{\cB/\cO}=(\pi^*\Lambda^p\cE)(-p)$, the results follows from Proposition \ref{basics}. For the second formula we have also used the natural isomorphism $\Lambda^{\bar p}\cE = \Lambda^p\cE^*\otimes\Lambda^{r+1}\cE$.
\end{proof}

\begin{remark}\label{rem:KoszulLocallyFree} Notice that $\Lambda^p\cE\otimes S^{n-p}\cE=[\Omega^p_{\cB/\cO}]_n$. Thus, Proposition \ref{relativedifferentials} and Theorem \ref{thm:n-Koszulacyclic} tell us that  $\Kos(\cE)_n\to \wt\Kos(\cE)_n$ is an isomorphism for any $n\geq 0$ and the Koszul complexes $\wt\Kos(\cE)_n$ and $\Kos(\cE)_n$ are acyclic for any $n>0$ (thus we obtain the well known fact of the acyclicity of the Koszul complex of a locally free module).
\end{remark}

Let us denote   by $\cK_{p,n}$ the kernels of the morphisms $i_D$ in  $\Kos(\cE)_n$, that is,  $$\cK_{p,n}:=\Ker\big(\Lambda^p \cE\otimes S_{}^{n-p}\cE\xrightarrow{}\Lambda^{p-1} \cE\otimes S^{n-p+1}_{}\cE\big)$$

One has the following result (see \cite{Ver74} or \cite[Expos\'e XI]{De73-SGA7} for different approaches).
\begin{thm}\label{prop:VerdierRelative}
Let $\cE$ be a locally free sheaf of rank $r+1$ on a $k$-scheme $(X,\cO)$ and $\bbP= \Proj S^\punto\cE\xrightarrow{\pi} X$ the corresponding projective bundle.

Let $n$ be a positive integer number.
\begin{enumerate}\itemsep.5cm

\item \[R^i \pi_*\Omega^p_{\bbP/X}=\left\{ \begin{array}{ll} \cO &\text{ if } \ \ 0\leq i=p\leq r\\  0&\text{ otherwise } \end{array} \right. .\]

\item \[ R^i\pi_*\Omega^p_{\bbP/X}(n) =\left\{ \begin{array}{ll} 0 & \text{ if }\ \  i\neq 0\\ \cK_{p,n} & \text{ if }\ \ i=0\end{array} \right. \]
and, if $X$ is a $\bbQ$-scheme, then $$\cK_{p,n}\oplus \cK_{p-1,n}=\Lambda^p\cE\otimes S^{n-p}\cE.$$

\item \[ R^i \pi_*\Omega^p_{\bbP/X}(-n)= \left\{ \begin{array}{ll} 0 & \text{ if }\ \ i\neq r\\ \cK_{r-p,n}^* & \text{ if } \ \ i=r\end{array} \right. \]
and, if $X$ is a $\bbQ$-scheme, then $$\cK_{r-p,n}^*\oplus \cK_{r-p+1,n}^*=\Lambda^{\bar p}\cE^*\otimes S^{n -\bar p}\cE^*.$$
\end{enumerate}
\end{thm}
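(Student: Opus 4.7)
\textbf{Plan of proof of Theorem \ref{prop:VerdierRelative}.} The overall strategy is to exploit the short exact sequence
$$0\to\Omega^p_{\bbP/X}(m)\to\wt\Omega^p_{\cB/\cO}(m)\to\Omega^{p-1}_{\bbP/X}(m)\to 0$$
from Theorem \ref{thm:Kostildeacyclic} (valid for every $m\in\bbZ$), combined with the explicit computation of $R^i\pi_*\wt\Omega^p_{\cB/\cO}(m)$ given by Proposition \ref{relativedifferentials} and the acyclicity of $\Kos(\cE)_n$ recorded in Remark \ref{rem:KoszulLocallyFree}. The three parts are then obtained by induction on $p$, the base case being $\Omega^0_{\bbP/X}=\cO_\bbP$ together with Proposition \ref{basics}.

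For part (1) ($m=0$), Proposition \ref{relativedifferentials} applied with $n=0$ shows that $R^i\pi_*\wt\Omega^p_{\cB/\cO}=\Lambda^p\cE\otimes R^i\pi_*\cO_\bbP(-p)$ vanishes for every $i$ when $1\leq p\leq r$ (since both the non-positive and the $-r$ range of Proposition \ref{basics} give zero). The long exact sequence of $R\pi_*$ then collapses to isomorphisms $R^i\pi_*\Omega^p_{\bbP/X}\isom R^{i-1}\pi_*\Omega^{p-1}_{\bbP/X}$, and iterating down to $\Omega^0_{\bbP/X}=\cO_\bbP$ yields $R^p\pi_*\Omega^p_{\bbP/X}=\cO$ with vanishing in every other degree.

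For part (2) ($m=n>0$), Proposition \ref{relativedifferentials} gives $R^i\pi_*\wt\Omega^p_{\cB/\cO}(n)=0$ for $i>0$, and the same inductive dimension-shift argument shows $R^i\pi_*\Omega^p_{\bbP/X}(n)=0$ for $i>0$. Applying $\pi_*$ to the right resolution of $\Omega^p_{\bbP/X}$ from Theorem \ref{thm:Kostildeacyclic} and twisting by $\cO_\bbP(n)$ then gives
$$\pi_*\Omega^p_{\bbP/X}(n)=\Ker\bigl([\Omega^p_{\cB/\cO}]_n\xrightarrow{i_D}[\Omega^{p-1}_{\cB/\cO}]_n\bigr)=\cK_{p,n},$$
the first equality by Remark \ref{rem:KoszulLocallyFree}. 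Finally, applying $\pi_*$ to the exact sequence from Corollary \ref{cor:relativesplitting} (which splits as $\cO$-modules over $\bbQ$) produces the claimed decomposition $\Lambda^p\cE\otimes S^{n-p}\cE=\cK_{p,n}\oplus\cK_{p-1,n}$.

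Part (3) ($m=-n$, $n>0$) is dual in spirit: now $R^i\pi_*\wt\Omega^p_{\cB/\cO}(-n)$ is concentrated in degree $i=r$, so the long exact sequence forces $R^i\pi_*\Omega^p_{\bbP/X}(-n)=0$ for $i<r-1$ by induction on $p$, and the vanishing at $i=r-1$ and $i=r+1$ is handled separately using $R^{r+1}\pi_*=0$ (relative dimension $r$) and the inductive hypothesis. One is left with
$$0\to R^r\pi_*\Omega^p_{\bbP/X}(-n)\to R^r\pi_*\wt\Omega^p_{\cB/\cO}(-n)\to R^r\pi_*\wt\Omega^{p-1}_{\cB/\cO}(-n),$$
so $R^r\pi_*\Omega^p_{\bbP/X}(-n)$ is the kernel of an induced map between $\Lambda^{\bar p}\cE^*\otimes S^{n-\bar p}\cE^*$ and $\Lambda^{\overline{p-1}}\cE^*\otimes S^{n-\overline{p-1}}\cE^*$. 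The main technical step, and the point where I expect to spend most care, is to recognize this induced map as the transpose of the Koszul differential $i_D\colon\Lambda^{r+2-p}\cE\otimes S^{n-r-2+p}\cE\to\Lambda^{r+1-p}\cE\otimes S^{n-r-1+p}\cE$ appearing in $\Kos(\cE)_n$; granting this, acyclicity of $\Kos(\cE)_n$ identifies the cokernel of that Koszul map with $\cK_{r-p,n}$, hence its kernel dualizes to $\cK_{r-p,n}^*$. The splitting statement on $\bbQ$-schemes follows once more by applying $R^r\pi_*$ to the $\cO$-linearly split Euler sequence of Corollary \ref{cor:relativesplitting}.
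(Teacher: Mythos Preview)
Your treatment of parts (1) and (2) coincides with the paper's: the paper phrases it as using the right resolution $0\to\Omega^p_{\bbP/X}(n)\to\wt\Omega^p_{\cB/\cO}(n)\to\cdots\to\cO_\bbP(n)\to 0$ by $\pi_*$-acyclic sheaves, which is exactly your inductive dimension-shift argument unwound. The $\bbQ$-splitting clauses are also handled identically, via Corollary \ref{cor:relativesplitting}.

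For part (3) you diverge from the paper, and here there is a genuine concern. The paper does \emph{not} chase the long exact sequence; instead it invokes relative Grothendieck duality in one line: since $\Omega^p_{\bbP/X}\simeq\SHom(\Omega^{r-p}_{\bbP/X},\Omega^r_{\bbP/X})$, one has
\[
\bR\pi_*\Omega^p_{\bbP/X}(-n)\simeq\bR\SHom\bigl(\bR\pi_*\Omega^{r-p}_{\bbP/X}(n)[r],\cO\bigr),
\]
and then (2) gives the result immediately. Your route instead requires identifying the map $R^r\pi_*(i_D)\colon R^r\pi_*\wt\Omega^p_{\cB/\cO}(-n)\to R^r\pi_*\wt\Omega^{p-1}_{\cB/\cO}(-n)$ with the transpose of a Koszul differential. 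You correctly flag this as the crux, but you should be aware that this identification is not a formality: the very isomorphisms $R^r\pi_*\cO_\bbP(-m)\simeq(S^{m-r-1}\cE\otimes\Lambda^{r+1}\cE)^*$ in Proposition \ref{basics} already encode Serre duality for $\pi$, and making them \emph{natural in the map $i_D$} is precisely (a special case of) the Grothendieck duality isomorphism. So your ``direct'' approach is not more elementary; it rederives duality in a specific instance rather than citing it. Unless you intend to carry out an explicit \v{C}ech computation to pin down $R^r\pi_*(i_D)$, it is cleaner---and shorter---to argue as the paper does.
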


\begin{proof} Let $n\geq 0$. By Theorem \ref{thm:Kostildeacyclic}
$$0\to\Omega^p_{\bbP/X}(n)\xrightarrow{} \widetilde{\Omega}^p_{\cB/\cO}(n)\xrightarrow{} \cdots\xrightarrow{} \widetilde{\Omega}_{\cB/\cO}(n)\xrightarrow{}  \cO_{\bbP}(n)\to 0$$ is a resolution of $\Omega^p_{\bbP/X}(n)$ by $\pi_*$-acyclic sheaves (by Proposition \ref{relativedifferentials}). One concludes then by Proposition \ref{relativedifferentials} and Remark \ref{rem:KoszulLocallyFree}.

(3) follows from (2) and (relative) Grothendieck duality: one has an isomorphism $\Omega_{\bbP/X}^p= \SHom(\Omega_{\bbP/X}^{r-p}, \Omega^r_{\bbP/X})$ and then $$\bR \pi_*\Omega_{\bbP/X}^p(-n)\simeq\bR \pi_*\SHom(\Omega_{\bbP/X}^{r-p}(n),\Omega^r_{\bbP/X})\simeq \bR\SHom(\bR \pi_*\Omega^{r-p}_{\bbP/X}(n)[r],\cO)$$
and one concludes by  (2).

Finally, the statements of (2) and (3) regarding the case that $X$ is a $\bbQ$-scheme follow from Corollary \ref{cor:relativesplitting}.
\end{proof}

\begin{cor} (Bott's formula) Let $\bbP_r$ be the projective space of dimension $r$ over a field $k$. Let $n$ be a positive integer number.
\begin{enumerate}\itemsep.5cm

\item \[\dim_kH^q(\bbP_r, \Omega^p_{\bbP_r})=\left\{ \begin{array}{ll} 1&\text{ if }\ \ 0\leq q=p\leq r \\ 0 & \text{ otherwise }  \end{array} \right.\]

\item \[ \dim_kH^q(\bbP_r, \Omega^p_{\bbP_r}(n)) =\left\{ \begin{array}{ll} 0 & \text{ if } \ \ q\neq 0\\ \binom{n+r-p}{n}\binom{n-1}{p}&\text{ if }\ \ q=0\end{array} \right.\]

\item \[  \dim_kH^q(\bbP_r, \Omega^p_{\bbP_r}(-n)) = \left\{ \begin{array}{ll} 0 & \text{ if } \ \ q\neq r\\ \binom{n+p}{n}\binom{n-1}{r-p} & \text{ if } \ \ q=r\end{array} \right.\]
\end{enumerate}
\end{cor}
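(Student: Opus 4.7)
The plan is to specialize Theorem \ref{prop:VerdierRelative} to $X = \Spec k$ with $\cE = k^{r+1}$, so that $\bbP = \bbP_r$ and for the structural morphism $\pi\colon \bbP_r \to \Spec k$ one has $R^i\pi_*\cF = H^i(\bbP_r,\cF)$ as a $k$-vector space. Under this specialization, parts (1), (2) and (3) of the corollary follow directly from the corresponding parts of the theorem at the level of vanishing; only the $k$-dimensions of the non-vanishing cohomology groups remain to be computed.

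Part (1) is immediate since Theorem \ref{prop:VerdierRelative}(1) yields $R^p\pi_*\Omega^p_{\bbP_r}=k$ for $0\le p\le r$ and vanishing otherwise. For (2), the vanishing for $q\neq 0$ is immediate, and what remains is to compute $\dim_k\cK_{p,n}$. I would proceed by downward induction on $p$, using the acyclicity of the Koszul complex $\Kos(\cE)_n$ for $n>0$ (Remark \ref{rem:KoszulLocallyFree}) to extract the short exact sequences
\[0 \to \cK_{p+1,n} \to \Lambda^{p+1}\cE \otimes S^{n-p-1}\cE \to \cK_{p,n} \to 0,\]
with base case $\cK_{n,n}=0$ coming from exactness at the leftmost non-zero term. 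Plugging in $\dim_k\Lambda^i\cE=\binom{r+1}{i}$ and $\dim_k S^j\cE=\binom{j+r}{r}$, the induction step reduces to the Pascal-type identity
\[\binom{r+1}{p+1}\binom{n+r-p-1}{r} = \binom{n+r-p}{n}\binom{n-1}{p} + \binom{n+r-p-1}{n}\binom{n-1}{p+1},\]
which is verified by expanding the binomial coefficients.

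Part (3) follows from the isomorphism $R^r\pi_*\Omega^p_{\bbP_r}(-n)\simeq \cK_{r-p,n}^*$ of Theorem \ref{prop:VerdierRelative}(3): this vector space has the same $k$-dimension as $\cK_{r-p,n}$, and the substitution $p\mapsto r-p$ in the formula just computed gives $\binom{n+p}{n}\binom{n-1}{r-p}$, as claimed. The only real obstacle is the combinatorial identity displayed above, which is elementary but requires some bookkeeping with edge cases (when some binomial coefficients or exterior/symmetric powers vanish, i.e.\ when $p>r$ or $p\geq n$); beyond this, the corollary is a direct specialization of the previous theorem.
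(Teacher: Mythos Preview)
Your proposal is correct and follows essentially the same approach as the paper: both specialize Theorem \ref{prop:VerdierRelative} to $X=\Spec k$, reduce everything to the computation of $\dim_k\cK_{p,n}$, and verify this dimension via the short exact sequences $0\to\cK_{p,n}\to\Lambda^p\cE\otimes S^{n-p}\cE\to\cK_{p-1,n}\to 0$ together with the same binomial identity (your displayed identity becomes the paper's after the shift $p\mapsto p-1$). The only cosmetic difference is that you are explicit about the direction of the induction and the base case $\cK_{n,n}=0$, whereas the paper simply checks that the claimed formula satisfies the recursion.
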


\begin{proof} It follows from Theorem \ref{prop:VerdierRelative}, once one proves that $\dim_k \cK_{p,n}=\binom{n+r-p}{n}\binom{n-1}{p}$. From the exact sequence $0\to \cK_{p,n}\to \Lambda^p\cE\otimes S^{n-p}\cE\to \cK_{p-1,n}\to 0$ it follows that $\dim_k \cK_{p,n}+\dim_k \cK_{p-1,n} = \binom{r+1}{p}\binom{n-p+r}{r}$; hence it suffices to prove that
\[ \binom{n+r-p}{n}\binom{n-1}{p}+ \binom{n+r-p+1}{n}\binom{n-1}{p-1}= \binom{r+1}{p}\binom{n-p+r}{r}\] which is an easy computation if one writes $\binom ab=\frac{a!}{b!(a-b)!}$.
\end{proof}

\begin{remark} (1) We can give an interpretation of $H^0(\bbP_r, \Omega^p_{\bbP_r}(n))$ in terms of differentials forms of the polynomial ring $k[x_0,\dots,x_r]$; one has the exact sequence \[0\to H^0(\bbP_r, \Omega^p_{\bbP_r}(n))\to [\Omega^p_{k[x_0,\dots, x_r]/k}]_n\overset {i_D}\to [\Omega^{p-1}_{k[x_0,\dots, x_r]/k}]_n\] that is, $H^0(\bbP_r, \Omega^p_{\bbP_r}(n))$ are those $p$-forms $\omega_p\in \Omega^p_{k[x_0,\dots, x_r]/k}$ which are homogeneous of degree $n$ and such that $i_D\omega_p=0$, where $D=\sum_{i=0}^{r}x_i\frac{\partial}{\partial x_i}$.

(2) From the exact sequence
\[ 0\to H^0(\bbP_r, \Omega^p_{\bbP_r}(n))\to \Lambda^p\cE\otimes S^{n-p}\cE\to \cdots\to \cE\otimes S^{n-1}\cE\to S^n\cE\to 0\] we can give a different combinatorial expression of $\dim_k H^0(\bbP_r, \Omega^p_{\bbP_r}(n))$ (as Verdier does):
\[ \dim_k H^0(\bbP_r, \Omega^p_{\bbP_r}(n))= \sum_{i=0}^p (-1)^i\binom {r+1}{p-i}\binom {n+r-p+i}{r} .\]
\end{remark}

It follows from Theorem \ref{prop:VerdierRelative} that $H^q(\bbP,  \Omega^p_{\bbP/X})= H^{q-p}(X,\cO)$. For the twisted case we have the following:

\begin{cor}\label{cor:relativebott} Let $X$ be a proper scheme over a field $k$ of characteristic zero. Let $\cE$ be a locally free module on $X$ of rank $r+1$ and $\bbP=\Proj S^\punto\cE$ the associated projective bundle. Then, for any positive integer $n$, one has:

\begin{enumerate}

\item $ \dim_kH^q(\bbP , \Omega^p_{\bbP/X}(n)) =  \sum_{i=0}^p (-1)^{i}\dim H^q(X, \Lambda^{p-i}  \cE\otimes S^{n-p+i}_{}\cE) .$

\item $  \dim_kH^q(\bbP, \Omega^p_{\bbP/X}(-n)) =  \sum_{i=0}^{p} (-1)^{i}\dim H^{q-r}(X, \Lambda^{\bar p+i} \cE^*\otimes S^{n-\bar{p}-i}_{}\cE^*) \\ \text{ with }\bar p=r+1-p.$

\end{enumerate}
\end{cor}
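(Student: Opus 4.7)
\medskip

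\noindent\textbf{Proof proposal.} The plan is to transfer the cohomology computation from $\bbP$ down to $X$ via the Leray spectral sequence, and then to unwind the Koszul filtration using the characteristic-zero splitting supplied by Theorem~\ref{prop:VerdierRelative}. Properness of $X$ over $k$ guarantees that all cohomology dimensions that appear are finite.

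First I would observe that, by parts (2) and (3) of Theorem~\ref{prop:VerdierRelative}, the higher direct images of $\Omega^p_{\bbP/X}(n)$ all vanish except $\pi_*\Omega^p_{\bbP/X}(n)=\cK_{p,n}$, and those of $\Omega^p_{\bbP/X}(-n)$ all vanish except $R^r\pi_*\Omega^p_{\bbP/X}(-n)=\cK_{r-p,n}^*$. In both situations the Leray spectral sequence therefore collapses and yields
\[ H^q(\bbP,\Omega^p_{\bbP/X}(n))\cong H^q(X,\cK_{p,n}), \qquad H^q(\bbP,\Omega^p_{\bbP/X}(-n))\cong H^{q-r}(X,\cK_{r-p,n}^*). \]
This reduces the problem to computing the dimensions of the cohomology of $\cK_{p,n}$ and $\cK_{r-p,n}^*$ on $X$.

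Next, since $X$ is a $\bbQ$-scheme, the splittings in Theorem~\ref{prop:VerdierRelative} give the additive recursion
\[ \dim H^q(X,\cK_{p,n}) = \dim H^q(X,\Lambda^p\cE\otimes S^{n-p}\cE) - \dim H^q(X,\cK_{p-1,n}) \]
and the analogous identity relating $\cK_{r-p,n}^*$, $\cK_{r-p+1,n}^*$, and $\Lambda^{\bar p}\cE^*\otimes S^{n-\bar p}\cE^*$. For (1) I would iterate downward in the first index, starting from the base case $\cK_{0,n}=S^n\cE$; for (2), upward from the base case $\cK_{r+1,n}^*=0$, which follows from the acyclicity of $\Kos(\cE)_n$ recorded in Remark~\ref{rem:KoszulLocallyFree} (that acyclicity forces the top Koszul differential to be injective). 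The iterations produce directly the two claimed alternating sums.

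The main obstacle I anticipate is purely bookkeeping in part~(2): one must verify that the upward recursion for $\cK_{r-p,n}^*$ terminates at exactly the index $i=p$, i.e.\ that the next would-be term at $i=p+1$ involves $\Lambda^{\bar p+p+1}\cE^*=\Lambda^{r+2}\cE^*$ and hence vanishes. Once this index check is done, both formulas follow formally from the Leray collapse together with the characteristic-zero splitting, with no further geometric ingredient required.
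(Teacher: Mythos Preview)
Your proposal is correct and follows essentially the same route as the paper. The only cosmetic difference is that the paper applies the splitting directly on $\bbP$ via Corollary~\ref{cor:relativesplitting} (obtaining $H^q(\bbP,\Omega^p_{\bbP/X}(n))\oplus H^q(\bbP,\Omega^{p-1}_{\bbP/X}(n))=H^q(\bbP,\wt\Omega^p_{\cB/\cO}(n))$ and then identifying the right-hand side with $H^q(X,\Lambda^p\cE\otimes S^{n-p}\cE)$ by Proposition~\ref{relativedifferentials}), whereas you first descend to $X$ via Leray and Theorem~\ref{prop:VerdierRelative} and then invoke the splitting of the $\cK_{p,n}$ there; the recursion and its unwinding are identical in both cases.
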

\begin{proof}  (1) By Corollary \ref{cor:relativesplitting}, one has $$H^q(\bbP , \Omega^p_{\bbP/X}(n))\oplus H^q(\bbP , \Omega^{p-1}_{\bbP/X}(n)) = H^q(\bbP , \wt\Omega^p_{\cB/\cO}(n))$$
and $H^q(\bbP , \wt\Omega^p_{\cB/\cO}(n))=H^q(X, \Lambda^p\cE\otimes S^{n-p}\cE)$ by Proposition \ref{relativedifferentials}. Conclusion follows.

(2) is completely analogous.

\end{proof}

\subsection{Cohomology of $\Omega^p_{\bbP/k}(n)$}
$\,$

Let us consider the exact sequence of differentials
\[ 0\to \Omega_{X/k}\otimes_\cO\cB\to\Omega_{\cB/k}\to\Omega_{\cB/\cO}\to 0\]

This sequence  locally splits: indeed, if $\cE$ is trivial, then $\cE=E\otimes_k\cO$ and $\cB=B\otimes_k\cO$, with $B=S^\punto E$; hence,  $\Omega_{\cB/\cO}=\Omega_{B/k}\otimes_k\cO$ and there is a natural morphism $\Omega_{B/k}\otimes_k\cO\to \Omega_{\cB/k}$ which is a section of $\Omega_{\cB/k}\to\Omega_{\cB/\cO}$.

\begin{remark} The exact sequence is a sequence of graded $\cB$-modules, hence it gives an exact sequence of $\cO$-modules in each degree. In particular, in degree 0 one obtains an isomorphism $\Omega_{X/k}=[\Omega_{\cB/k}]_0$, and an exact sequence in degree 1:
\[ 0\to\Omega_{X/k}\otimes_\cO \cE\to \left[ \Omega_{\cB/k}\right]_1\to \cE\to 0\]
which is nothing but the Atiyah extension.
\end{remark}

Taking homogeneous localizations we obtain an exact sequence of $\cO_\bbP$-modules
\[ 0\to\pi^*\Omega_{X/k} \to\wt\Omega_{\cB/k}\to\wt\Omega_{\cB/\cO} \to 0\] which splits  locally (on $X$).

\begin{prop}\label{absolutedifferentials}  Let $n$ be a positive integer. Then:
\begin{enumerate}
\item
\[ R^i\pi_*\wt\Omega^p_{\cB/k} =\left\{\begin{array}{ll} 0  &\text{ for }\ \ i\neq 0,r\\ \Omega^p_{X/k}\,  &\text{ for }\ \ i=0 \\ \Omega^{p-r-1}_{X/k}  &\text{ for }\ \ i=r
\end{array}\right. .\]

\item
\[ R^i\pi_*\wt\Omega^p_{\cB/k} (n)=\left\{\begin{array}{ll} 0  &\text{ for } \ \ i\neq 0\\ \left[\Omega^p_{\cB/k}\right]_n & \text{ for } \ \ i=0
\end{array}\right.\]
\item $R^i \pi_* \widetilde{\Omega}^p_{\cB/k}(-n)=0$ for $i\neq r$ and $R^r \pi_* \widetilde{\Omega}^p_{\cB/k}(-n)$ is locally isomorphic to $\bigoplus_{q=0}^p ( \Omega_{X/k}^{p-q}\otimes\Lambda^{\bar q}\cE^*\otimes S^{n-\bar q}\cE^* )$, with $\bar q=r+1-q$.
\item Furthermore, if $X$ is a smooth $k$-scheme (of relative dimension $d$), then
$$R^r \pi_* \widetilde{\Omega}^p_{\cB/k}(-n)=\left[\Omega^{d+\bar p}_{\cB/k}\right]_n^*\otimes\Omega_{X/k}^d$$
\end{enumerate}
\end{prop}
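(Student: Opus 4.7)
The plan is to combine the locally split short exact sequence
\[ 0\to \pi^*\Omega_{X/k}\to \wt\Omega_{\cB/k}\to \wt\Omega_{\cB/\cO}\to 0 \]
established just above the proposition with the computations of $R^i\pi_*\wt\Omega^{p-q}_{\cB/\cO}(m)$ furnished by Proposition \ref{relativedifferentials}. Taking $p$-th exterior powers of a locally split sequence yields, locally on $X$, a direct sum decomposition
\[ \wt\Omega^p_{\cB/k}\cong \bigoplus_{q=0}^{p}\pi^*\Omega^q_{X/k}\otimes \wt\Omega^{p-q}_{\cB/\cO}, \]
and globally a filtration of $\wt\Omega^p_{\cB/k}$ with these graded pieces. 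Since higher direct images are local on the base, this reduces (1)--(3) to summing or splicing together local contributions via projection formula.

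For (2), since $n>0$, only $R^0\pi_*$ of each summand is nonzero by Proposition \ref{relativedifferentials}; the long exact sequences from the filtration collapse and, locally on $X$, the natural morphism $[\Omega^p_{\cB/k}]_n\to \pi_*\wt\Omega^p_{\cB/k}(n)$ matches the sum $\bigoplus_q \Omega^q_{X/k}\otimes [\Omega^{p-q}_{\cB/\cO}]_n$, so it is an isomorphism globally. For (3), dually only $R^r\pi_*$ survives, and Proposition \ref{relativedifferentials} gives the summands $\Omega^q_{X/k}\otimes \Lambda^{\overline{p-q}}\cE^*\otimes S^{n-\overline{p-q}}\cE^*$; after the relabelling $q\leftrightarrow p-q$, this is exactly the claimed local description. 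For (1) with no twist, Proposition \ref{basics} forces either $p-q=0$ (contributing $\Omega^p_{X/k}$ to $R^0\pi_*$) or $p-q=r+1$ (contributing $\Omega^{p-r-1}_{X/k}$ to $R^r\pi_*$ after using $\Lambda^{r+1}\cE\otimes \Lambda^{r+1}\cE^*\cong \cO$); all intermediate terms vanish.

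For part (4), assume $X$ is smooth of relative dimension $d$, so that $\pi\colon \bbP\to X$ is smooth proper of relative dimension $r$ and $\bbP$ is smooth over $k$. The key identification is $\det\wt\Omega_{\cB/k}\cong \omega_{\bbP/k}$: from the defining exact sequence one has $\det\wt\Omega_{\cB/k}= \pi^*\Omega^d_{X/k}\otimes \det\wt\Omega_{\cB/\cO}$, and from the Euler sequence (Theorem \ref{thm:Kostildeacyclic}) $\det\wt\Omega_{\cB/\cO}= \Omega^r_{\bbP/X}= \omega_{\bbP/X}$. Taking exterior-power duals therefore gives
\[ (\wt\Omega^p_{\cB/k})^{\vee}\otimes \omega_{\bbP/X}\;\cong\; \wt\Omega^{d+\bar p}_{\cB/k}\otimes \pi^*(\Omega^d_{X/k})^{-1}. \]
Relative Grothendieck duality for $\pi$ then yields
\[ R^r\pi_*\wt\Omega^p_{\cB/k}(-n)\;\cong\;\bigl(\pi_*\wt\Omega^{d+\bar p}_{\cB/k}(n)\bigr)^{\vee}\otimes \Omega^d_{X/k}, \]
and applying part (2) to the inner term produces $[\Omega^{d+\bar p}_{\cB/k}]_n^{*}\otimes \Omega^d_{X/k}$.

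The main obstacle is the careful bookkeeping in (4): correctly identifying $\omega_{\bbP/k}$ with $\det\wt\Omega_{\cB/k}$ (which requires combining the two exact sequences), and then tracking the twist $\cO_\bbP(-n)$ and the dualizing twist through Grothendieck duality so that part (2) applies verbatim. The filtration arguments underlying (1)--(3) are routine once the local splitting is available, but the global identification in (2) also requires verifying that the obvious morphism from the module of differentials is the one that becomes an isomorphism locally.
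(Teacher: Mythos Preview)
Your proposal is correct and follows essentially the same route as the paper: use the locally split sequence $0\to\pi^*\Omega_{X/k}\to\wt\Omega_{\cB/k}\to\wt\Omega_{\cB/\cO}\to 0$ to reduce (1)--(3) to Proposition~\ref{relativedifferentials}, and handle (4) by relative Grothendieck duality via the identifications $\wt\Omega^{d+r+1}_{\cB/k}\cong\omega_{\bbP/X}\otimes\pi^*\Omega^d_{X/k}$ and $\SHom(\wt\Omega^p_{\cB/k},\wt\Omega^{d+r+1}_{\cB/k})\cong\wt\Omega^{d+\bar p}_{\cB/k}$. The only cosmetic difference is that the paper, rather than invoking the filtration, writes down explicit global morphisms for the nonvanishing pieces in (1) (the inclusion $\Omega^p_{X/k}\hookrightarrow\pi_*\wt\Omega^p_{\cB/k}$ and a cup-product map $\Omega^{p-r-1}_{X/k}\to R^r\pi_*\wt\Omega^p_{\cB/k}$) and checks them locally; your filtration long exact sequences produce the same isomorphisms.
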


\begin{proof} If $\cE$ is trivial, then  $\wt\Omega_{\cB/k}=\pi^*\Omega_{X/k}\oplus \wt\Omega_{\cB/\cO}$, so $\wt\Omega^p_{\cB/k}=\bigoplus_{q=0}^p \pi^*\Omega^{p-q}_{X/k}\otimes \wt\Omega^q_{\cB/\cO}$ and (1)-(3) follow from Proposition \ref{relativedifferentials} in this case. Since $\cE$ is locally trivial, we obtain the vanishing statements of (1)-(3).

(1) The natural morphism $\Omega^p_{X/k}\to \pi_*\wt\Omega^p_{\cB/k}$ is an isomorphism because it is locally so. The natural morphism $\wt \Omega^{r+1}_{\cB/k}\to \Omega^{r+1}_{\cB/\cO}$ gives a morphism $  R^r\pi_*\wt\Omega^{r+1}_{\cB/k}\to R^r\pi_*\wt\Omega^{r+1}_{\cB/\cO}=\cO$, which is an isomorphism because it is locally so. Finally, for any $p\geq 0$, the natural morphism $\wt\Omega^p_{\cB/k}\otimes \wt\Omega^{r+1}_{\cB/k}\to \wt\Omega^{p+r+1}_{\cB/k}$ induces a morphism $\pi_*(\wt\Omega^p_{\cB/k})\otimes R^r\pi_*(\wt\Omega^{r+1}_{\cB/k})\to R^r\pi_*\wt\Omega^{p+r+1}_{\cB/k}$, i.e. a morphism $\Omega^p_{X/k}\to R^r\pi_*\wt\Omega^{p+r+1}_{\cB/k}$, which is an isomorphism because it is locally so.

(2) The natural morphism $\left[\Omega^p_{\cB/k}\right]_n \to \pi_*\wt\Omega^p_{\cB/k} (n)$ is an isomorphism because it is locally so.

It only remains to prove (4), which is a consequence of (relative) Grothendieck duality. Indeed, notice that, under the smoothness hypothesis, $R^r\pi_*\widetilde{\Omega}^p_{\cB/k}(-n)$ is locally free, by (3). Hence, if suffices to compute its dual. This is given by duality: the relative dualizing sheaf is $\Omega^r_{\bbP/X}=\wt\Omega^{r+1}_{\cB/\cO}$ and one has isomorphisms $\wt \Omega^{d+r+1}_{\cB/k}=\wt\Omega^{r+1}_{\cB/\cO}\otimes \pi^*\Omega^d_{X/k}$ and $\SHom (\widetilde{\Omega}^p_{\cB/k}, \wt \Omega^{d+r+1}_{\cB/k}) = \wt\Omega^{d+\bar p}_{\cB/k}$; then:
\[\aligned \left[R^r\pi_*\widetilde{\Omega}^p_{\cB/k}(-n)\right] ^*&=\pi_* \SHom_\bbP(\widetilde{\Omega}^p_{\cB/k}(-n),\widetilde{\Omega}^{r+1}_{\cB/\cO})\\
&= \pi_*[ \SHom_\bbP(\widetilde{\Omega}^p_{\cB/k}(-n),\widetilde{\Omega}^{d+r+1}_{\cB/k})\otimes \pi^*(\Omega^d_{X/k})^*]\\ &=(\pi_*\wt\Omega^{d+\bar p}_{\cB/k}(n))\otimes (\Omega^d_{X/k})^*\overset{(2)}= [\Omega^{d+\bar p}_{\cB/k}]_n \otimes (\Omega^d_{X/k})^* .\endaligned\]

\end{proof}

\begin{cor} The Koszul complexes $\Kos(\cE/k)_n$ and $\widetilde\Kos(\cE/k)_n  $ are acyclic for $n> 0$ and $\Kos(\cE/k)_n\to \widetilde\Kos(\cE/k)_n  $ is an isomorphism for any $n\geq 0$.
\end{cor}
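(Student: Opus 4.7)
The plan is to obtain both statements as an immediate application of Theorem \ref{thm:n-globalKoszulacyclic}, using the cohomological computations for $\wt\Omega^p_{\cB/k}(n)$ provided by Proposition \ref{absolutedifferentials}. Since $\cE$ is locally free of rank $r+1$, the module $\cM=\cE$ is finitely generated (quasi-coherent), so $\bbP=\Proj S^\punto\cE\to X$ is the standard projective bundle, and the hypotheses of Theorem \ref{thm:n-globalKoszulacyclic} are available.

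For the acyclicity for $n>0$, I would first verify condition $(1)$ of Theorem \ref{thm:n-globalKoszulacyclic}: by Proposition \ref{absolutedifferentials}(2), one has $R^j\pi_*\wt\Omega^i_{\cB/k}(n)=0$ for every $j>0$ and every $i\geq 0$ (in particular for $0\leq i\leq d'$), which is exactly what is required. Next, the same Proposition \ref{absolutedifferentials}(2) gives that the natural map $[\Omega^i_{\cB/k}]_n\to \pi_*\wt\Omega^i_{\cB/k}(n)$ is an isomorphism for each such $i$, verifying condition $(2)$. Applying Theorem \ref{thm:n-globalKoszulacyclic} yields the acyclicity of $\wt\Kos(\cE/k)_n$ and of $\Kos(\cE/k)_n$, and also shows, termwise, that $\Kos(\cE/k)_n\to \wt\Kos(\cE/k)_n$ is an isomorphism of complexes for every $n>0$.

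To cover the remaining case $n=0$ of the isomorphism claim, I would use Proposition \ref{absolutedifferentials}(1): it asserts $\pi_*\wt\Omega^p_{\cB/k}=\Omega^p_{X/k}$, and on the other hand the degree $0$ piece of the exact sequence $0\to \Omega_{X/k}\otimes_\cO\cB\to \Omega_{\cB/k}\to \Omega_{\cB/\cO}\to 0$ (and its exterior powers) gives $[\Omega^p_{\cB/k}]_0=\Omega^p_{X/k}$, so the natural morphism $[\Omega^p_{\cB/k}]_0\to \pi_*\wt\Omega^p_{\cB/k}$ is the identity on $\Omega^p_{X/k}$. Since the differentials $i_D$ act as multiplication by $0$ on the degree $0$ components, this identification is automatically compatible with them, so $\Kos(\cE/k)_0\to \wt\Kos(\cE/k)_0$ is also an isomorphism.

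There is no real obstacle here; the statement is essentially a packaging of Proposition \ref{absolutedifferentials} through the general Theorem \ref{thm:n-globalKoszulacyclic}. The only point requiring a small separate argument is the degenerate case $n=0$, for which one must invoke part $(1)$ rather than part $(2)$ of Proposition \ref{absolutedifferentials} and check compatibility of the resulting identifications with the differentials, which is trivial since the Euler derivation $D$ annihilates the degree-zero part.
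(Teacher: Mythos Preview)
Your proposal is correct and follows exactly the route the paper intends: the corollary is the global analogue of Remark \ref{rem:KoszulLocallyFree}, obtained by feeding Proposition \ref{absolutedifferentials} into Theorem \ref{thm:n-globalKoszulacyclic}. Your extra care with the $n=0$ case (using part (1) of Proposition \ref{absolutedifferentials} and noting that $i_D$ vanishes in degree $0$) spells out a detail the paper leaves implicit.
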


Let us denote by $\overline\cK_{p,n}$ the kernels of the morphisms $i_D$ in the Koszul complex $\Kos(\cE/k)_n$; that is,  $$\overline\cK_{p,n}:=\Ker\Big([\Omega^p_{\cB/k}]_n\xrightarrow{}[\Omega^{p-1}_{\cB/k}]_n\Big)$$

\begin{thm}\label{prop:VerdierAbsolute}
Let $\cE$ be a locally free sheaf of rank $r+1$ on a $k$-scheme $(X,\cO)$ and $\bbP= \Proj S^\punto\cE\xrightarrow{\pi} X$ the corresponding projective bundle.

Let $n$ be a positive integer. One has:
\begin{enumerate}
\item $ R^i\pi_* \Omega^p_{\bbP/k} = \Omega^{p-i}_{X/k}$.

\item
\[ R^i\pi_* \Omega^p_{\bbP/k} (n)=\left\{\begin{array}{ll} 0 &\text{ for } \ \ i\neq 0\\ \overline\cK_{p,n} &\text{ for }\ \ i=0
\end{array}\right.\] and, if $X$ is a $\bbQ$-scheme, then one has an isomorphism (of $k$-modules, not of $\cO$-modules)
$$\overline\cK_{p,n}\oplus \overline\cK_{p-1,n} = [\Omega^p_{\cB/k}]_n   .$$
\item $R^i \pi_*  {\Omega}^p_{\bbP/k}(-n)=0$ for $i\neq r$ and $R^r \pi_*  {\Omega}^p_{\bbP/k}(-n)$ is locally isomorphic to $\bigoplus_{q=0}^p  \Omega_{X/k}^{p-q}\otimes  \cK_{r-q,n}^*$. Moreover, if $X$ is a $\bbQ$-scheme, then one has an isomorphism (of $k$-modules, not of $\cO$-modules)
    $$R^r \pi_*  {\Omega}^p_{\bbP/k}(-n)\oplus R^r \pi_*  {\Omega}^{p-1}_{\bbP/k}(-n) = R^r\pi_*\wt\Omega^p_{\cB/k}(-n)$$
\item If $X$ is a smooth $k$-scheme (of relative dimension $d$), then
$$R^r \pi_*  {\Omega}^p_{\bbP/k}(-n)=\overline\cK_{d+r-p,n}^*\otimes\Omega_{X/k}^d$$ and, if $X$ is a $\bbQ$-scheme, then one has an isomorphism (of $k$-modules, not of $\cO$-modules)
$$R^r \pi_*  {\Omega}^p_{\bbP/k}(-n)\oplus R^r \pi_*  {\Omega}^{p-1}_{\bbP/k}(-n) = \left[\Omega^{d+\bar p}_{\cB/k}\right]_n^*\otimes\Omega_{X/k}^d. $$
\end{enumerate}
\end{thm}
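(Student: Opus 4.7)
The strategy parallels the proof of Theorem \ref{prop:VerdierRelative}: we apply $R\pi_*$ to the resolutions of $\Omega^p_{\bbP/k}$ furnished by Theorem \ref{thm:globalKostildeacyclic}, suitably twisted, and extract the cohomology using Proposition \ref{absolutedifferentials}.

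For (2), Proposition \ref{absolutedifferentials}(2) says the sheaves $\wt\Omega^i_{\cB/k}(n)$ are $\pi_*$-acyclic for $n>0$, so the twisted right resolution
\[ 0\to\Omega^p_{\bbP/k}(n)\to \wt\Omega^p_{\cB/k}(n)\to\cdots\to\wt\Omega_{\cB/k}(n)\to\cO_\bbP(n)\to 0 \]
is a $\pi_*$-acyclic resolution. Applying $\pi_*$ yields the degree-$n$ part of the Koszul complex $\Kos(\cE/k)_n$; since this complex (without its first term) comes from an exact sequence of $\pi_*$-acyclic sheaves, it is itself exact except at the leftmost position, where its cohomology is $\overline\cK_{p,n}$ by definition. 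This gives the vanishing for $i>0$ and $\pi_*\Omega^p_{\bbP/k}(n)=\overline\cK_{p,n}$. In the $\bbQ$-case, applying $\pi_*$ to the short exact sequences $0\to\Omega^p_{\bbP/k}(n)\to\wt\Omega^p_{\cB/k}(n)\to\Omega^{p-1}_{\bbP/k}(n)\to 0$, which split as sheaves of $k$-modules by Corollary \ref{cor:globalsplitting}, yields $\overline\cK_{p,n}\oplus\overline\cK_{p-1,n}=[\Omega^p_{\cB/k}]_n$.

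For (1) one uses the same argument with the untwisted right resolution, but now Proposition \ref{absolutedifferentials}(1) contributes in both $R^0$ and $R^r$; the $R^0$-row produces a complex whose differentials $\pi_*(i_D)$ vanish on the pure $\Omega_{X/k}$-part (since $D$ is trivial on $\pi^*\Omega_{X/k}$), yielding $R^i\pi_*\Omega^p_{\bbP/k}=\Omega^{p-i}_{X/k}$. For (3) we use the left resolution
\[ \cdots\to\wt\Omega^{p+2}_{\cB/k}(-n)\to\wt\Omega^{p+1}_{\cB/k}(-n)\to\Omega^p_{\bbP/k}(-n)\to 0, \]
whose terms have only $R^r\pi_*$ surviving by Proposition \ref{absolutedifferentials}(3); this forces $R^i\pi_*\Omega^p_{\bbP/k}(-n)=0$ for $i\ne r$, and for $i=r$ one obtains the stated local description by applying $R^r\pi_*$ termwise and invoking the relative statement of Theorem \ref{prop:VerdierRelative}(3) on local trivializations. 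The $\bbQ$-splittings in (3) follow again from Corollary \ref{cor:globalsplitting}.

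The main obstacle is (4), which rests on relative Grothendieck duality for the smooth projective morphism $\pi\colon\bbP\to X$. The relative dualizing sheaf is $\omega_{\bbP/X}=\Omega^r_{\bbP/X}$, and the short exact sequence of differentials identifies $\Omega^{d+r}_{\bbP/k}=\Omega^r_{\bbP/X}\otimes\pi^*\Omega^d_{X/k}$. Combining this with the self-duality $\SHom(\Omega^p_{\bbP/k},\Omega^{d+r}_{\bbP/k})=\Omega^{d+r-p}_{\bbP/k}$ (valid because $\bbP$ is smooth of dimension $d+r$), Grothendieck duality yields
\[ [R^r\pi_*\Omega^p_{\bbP/k}(-n)]^*=\pi_*\Omega^{d+r-p}_{\bbP/k}(n)\otimes(\Omega^d_{X/k})^*=\overline\cK_{d+r-p,n}\otimes(\Omega^d_{X/k})^* \]
by part (2). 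Dualizing once more, which is legitimate because $R^r\pi_*\Omega^p_{\bbP/k}(-n)$ is locally free by (3) together with smoothness, produces the stated formula; the final $\bbQ$-splitting is then obtained from the splitting of (3) (or directly from Corollary \ref{cor:globalsplitting} and Proposition \ref{absolutedifferentials}(4)).
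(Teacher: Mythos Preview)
Your treatment of (2), (4), and the $\bbQ$-splittings is essentially the paper's own argument. The gap is in (1), and to a lesser extent in your vanishing argument for (3).

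For (1) you invoke the untwisted right resolution and note that Proposition \ref{absolutedifferentials}(1) gives contributions in both $R^0$ and $R^r$, but then you analyse only the $R^0$-row and jump to the conclusion. This is not enough: in the hypercohomology spectral sequence both rows $E_1^{a,0}=\Omega^{p-a}_{X/k}$ and $E_1^{a,r}=\Omega^{p-a-r-1}_{X/k}$ have vanishing $d_1$ (for the same reason you give), so one must control the transgression $d_{r+1}\colon E_{r+1}^{a,r}\to E_{r+1}^{a+r+1,0}$. Unless you show this differential is an isomorphism, the $E_\infty^{a,r}$ terms survive and would contribute extra pieces to $R^{a+r}\pi_*\Omega^p_{\bbP/k}$, contradicting the statement. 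The paper sidesteps this entirely: it first obtains all vanishing statements for (1)--(3) by observing that when $\cE$ is trivial one has $\Omega^p_{\bbP/k}=\bigoplus_q \pi^*\Omega^{p-q}_{X/k}\otimes\Omega^q_{\bbP/X}$, so everything reduces to Theorem \ref{prop:VerdierRelative}; since vanishing is local on $X$, this suffices. For the identification in (1) the paper then constructs a global map $\Omega^{p-i}_{X/k}\to R^i\pi_*\Omega^p_{\bbP/k}$ by composing the natural map $\Omega^{p-i}_{X/k}\to\pi_*\Omega^{p-i}_{\bbP/k}$ with the iterated connecting homomorphisms of the short exact sequences $0\to\Omega^j_{\bbP/k}\to\wt\Omega^j_{\cB/k}\to\Omega^{j-1}_{\bbP/k}\to 0$, and checks on local trivializations that it is an isomorphism. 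This avoids any spectral-sequence bookkeeping.

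Similarly, your claim in (3) that the left resolution ``forces $R^i\pi_*\Omega^p_{\bbP/k}(-n)=0$ for $i\ne r$'' needs the extra input that the complex obtained by applying $R^r\pi_*$ termwise is exact except at the end; without a boundedness assumption on $\Omega^\punto_{\cB/k}$ or a separate argument this is not automatic. Again the paper obtains this vanishing directly from local triviality and Theorem \ref{prop:VerdierRelative}(3), which is the approach you already use for the local description of $R^r\pi_*$.
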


\begin{proof}  If $\cE$ is trivial, then  $\Omega_{\bbP/k}=\pi^*\Omega_{X/k}\oplus  \Omega_{\bbP/X}$, so $ \Omega^p_{\bbP/k}=\bigoplus_{q=0}^p \pi^*\Omega^q_{X/k}\otimes  \Omega^{p-q}_{\bbP/X}$ and (1)-(3) follow from Theorem \ref{prop:VerdierRelative} in this case. Since $\cE$ is locally trivial, we obtain the vanishing statements of (1)-(3).

(1) The exact sequences $0\to \Omega^p_{\bbP/k}\to\wt\Omega^p_{\cB/k}\to  \Omega^{p-1}_{\bbP/k}\to 0$ induce morphisms
\[ \pi_* \Omega^{p-i}_{\bbP/k}\to  R^1\pi_*\Omega^{p-i+1}_{\bbP/k}\to\cdots \to R^i\pi_* \Omega^p_{\bbP/k}\] whose composition with the natural morphism $\Omega^{p-i}_{X/k} \to \pi_* \Omega^{p-i}_{\bbP/k}$ gives a morphism $\Omega^{p-i}_{X/k} \to R^i\pi_* \Omega^p_{\bbP/k}$. This morphism is an isomorphism because it is locally so.

(2) The exact sequence $0\to \Omega^p_{\bbP/k}(n) \to \wt\Omega^p_{\cB/k}(n)\to \wt\Omega^{p-1}_{\cB/k}(n)$ induces, taking direct image, the isomorphism $\pi_*\Omega^p_{\bbP/k}(n)= \overline\cK_{p,n}$.

 (4) follows from (2) and (relative) Grothendieck duality. Indeed, notice that, under the smoothness hypothesis, $R^r\pi_* {\Omega}^p_{\bbP/k}(-n)$ is locally free, by (3). Hence, if suffices to compute its dual. This is given by duality: the relative dualizing sheaf is $\Omega^r_{\bbP/X}$ and one has isomorphisms $\Omega^{d+r}_{\bbP/k}=\Omega^r_{\bbP/X}\otimes\pi^*\Omega^d_{X/k}$ and   $\SHom (\Omega^p_{\bbP/k},   \Omega^{d+r}_{\bbP/k}) =  \Omega^{d+r-p}_{\bbP/k}$; then:
\[\aligned \left[R^r\pi_* {\Omega}^p_{\bbP/k}(-n)\right] ^*&=\pi_* \SHom_\bbP( {\Omega}^p_{\bbP/k}(-n), {\Omega}^r_{\bbP/k})\\
&= \pi_*[ \SHom_\bbP( {\Omega}^p_{\bbP/k}(-n), {\Omega}^{d+r}_{\bbP/k})\otimes \pi^*(\Omega^d_{X/k})^*]\\ &=(\pi_*\wt\Omega^{d+ r - p}_{\bbP/k}(n))\otimes (\Omega^d_{X/k})^*\overset{(2)}=\overline \cK_{d+r-p,n} \otimes (\Omega^d_{X/k})^* .\endaligned\]

Finally, the statements of (2)-(4) regarding  the case of a $\bbQ$-scheme follow from Corollary \ref{cor:globalsplitting}.
\end{proof}

\begin{remark} For $n=1$ a little more can be said (as Verdier does): The natural morphism $\Omega^p_{X/k}\otimes\cE\to \pi_*\Omega^p_{\bbP/k}(1)$ is an isomorphism. Indeed, the exact sequence
\[ 0\to \Omega_{X/k}\otimes\cB\to\Omega_{\cB/k}\to \Omega_{\cB/\cO}\to 0\] induces for each $p$ an exact sequence
\[ 0\to \Omega^p_{X/k}\otimes\cB\to\Omega^p_{\cB/k}\to \Omega^{p-1}_{\cB/k}\otimes \Omega_{\cB/\cO}\to \Omega^{p-2}_{\cB/k}\otimes S^2\Omega_{\cB/\cO}\to \cdots\] and taking degree 1, an exact sequence
\[ 0\to \Omega^p_{X/k}\otimes\cE\to [\Omega^p_{\cB/k}]_1\to \Omega^{p-1}_{X/k}\otimes \cE\to 0\] On the other hand, taking $\pi_*$ in the exact sequence
\[ 0\to \Omega^p_{\bbP/k}(1)\to \wt\Omega^p_{\cB/k}(1)\to \Omega^{p-1}_{\bbP/k}(1)\to 0\] gives the exact sequence
\[ 0\to \pi_*\Omega^p_{\bbP/k}(1)\to [\Omega^p_{\cB/k}]_1 \to \pi_*\Omega^{p-1}_{\bbP/k}(1)\to 0.\] Thus, the isomorphism $\Omega^p_{X/k}\otimes\cE\to \pi_*\Omega^p_{\bbP/k}(1)$ is proved by induction on $p$.
\end{remark}

\begin{remark} It is known  (see \cite{BI70} or \cite{Gros85}) that $\bR\pi_*  \Omega^{p}_{\bbP/k}$ is decomposable, i.e., one has an isomorphism in the derived category $\bR\pi_*\Omega^{p}_{\bbP/k}= \bigoplus_{i=0}^r \Omega^{p-i}_{X/k}[-i] $. Let us see that, for $p\in [0,r]$, this is a consequence of Theorem \ref{thm:globalKostildeacyclic} and Proposition \ref{absolutedifferentials}. Indeed, by Theorem \ref{thm:globalKostildeacyclic}, one has the exact sequence
\[ 0\to \Omega^{p}_{\bbP/k}\to \wt\Omega^{p}_{\cB/k}\to \wt\Omega^{p-1}_{\cB/k}\to \cdots \to \wt\Omega_{\cB/k}\to\cO_{\bbP}\to 0\] and, by Proposition \ref{absolutedifferentials}, $\wt\Omega^{p-i}_{\cB/k}$ are $\pi_*$-acyclic for any $i\geq 0$ and $\pi_*\wt\Omega^{p-i}_{\cB/k}=\Omega^{p-i}_{X/k}$. Then
\[\bR\pi_*  \Omega^{p}_{\bbP/k}\equiv\, 0\to \Omega^{p}_{X/k}\to  \Omega^{p-1}_{X/k}\to \cdots \to  \Omega_{X/k}\to\cO\to 0\]
and, since the differential $i_D\colon \Omega^j_{X/k}\to \Omega^{j-1}_{X/k}$ is null, we obtain the result.
\end{remark}

The decomposability of $\bR\pi_*  \Omega^{p}_{\bbP/k}$ implies an isomorphism  $$H^q(\bbP, \Omega^p_{\bbP/k}) =\bigoplus_{i=0}^{r}H^{q-i}(X,\Omega^{p-i}_{X/k}) $$ For the twisted case we have the following:

\begin{cor}\label{cor:absolutebott} Let $X$ be a proper scheme over a field $k$ of characteristic zero. Let $\cE$ be a locally free module on $X$ of rank $r+1$ and $\bbP=\Proj S^\punto\cE$ the associated projective bundle. Then, for any positive integer $n$, one has:
\begin{enumerate}
\item  $\dim_k  H^q(\bbP, \Omega^p_{\bbP/k}(n))=\sum_{i=0}^p(-1)^{i}\dim_k H^q(X, [\Omega^{p-i}_{\cB/k}]_n)$.
\item If $X$ is smooth over $k$ of dimension $d$, then $$\dim_k H^q(\bbP , \Omega^p_{\bbP /k}(-n))= \sum_{i=0}^{d+r-p}(-1)^{ i }\dim_k H^{d+r-q}(X, [\Omega^{d+r-p-i}_{\cB/k}]_n).$$
\end{enumerate}
\end{cor}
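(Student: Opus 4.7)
The plan is to follow the strategy of Corollary \ref{cor:relativebott}, replacing the relative splitting of Corollary \ref{cor:relativesplitting} by the global splitting of Corollary \ref{cor:globalsplitting}, and the computation of $R^i\pi_*\wt\Omega^p_{\cB/\cO}(n)$ in Proposition \ref{relativedifferentials} by the corresponding computation of $R^i\pi_*\wt\Omega^p_{\cB/k}(\pm n)$ in Proposition \ref{absolutedifferentials}. Since $k$ has characteristic zero and $n\neq 0$, Corollary \ref{cor:globalsplitting} guarantees that the short exact sequences of Theorem \ref{thm:globalKostildeacyclic}, twisted by $\cO_\bbP(\pm n)$, split as sequences of sheaves of $k$-modules, so they induce split short exact sequences in global cohomology.

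For (1), applying $H^q(\bbP,-)$ to the $k$-split short exact sequence
$$0\to \Omega^p_{\bbP/k}(n)\to \wt\Omega^p_{\cB/k}(n)\to \Omega^{p-1}_{\bbP/k}(n)\to 0$$
produces $\dim H^q(\bbP,\Omega^p_{\bbP/k}(n))+\dim H^q(\bbP,\Omega^{p-1}_{\bbP/k}(n))=\dim H^q(\bbP,\wt\Omega^p_{\cB/k}(n))$. By Proposition \ref{absolutedifferentials}(2) and Leray's spectral sequence, the right-hand side equals $\dim H^q(X,[\Omega^p_{\cB/k}]_n)$. Iterating this recursion downward in $p$, with base case $\Omega^{-1}_{\bbP/k}=0$, yields the alternating sum asserted in (1).

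For (2), applying $H^q(\bbP,-)$ to the analogous $k$-split sequence with $-n$ in place of $n$ gives the additive relation $\dim H^q(\bbP,\Omega^p_{\bbP/k}(-n))+\dim H^q(\bbP,\Omega^{p-1}_{\bbP/k}(-n))=\dim H^q(\bbP,\wt\Omega^p_{\cB/k}(-n))$. By Proposition \ref{absolutedifferentials}(3)--(4) only $R^r\pi_*\wt\Omega^p_{\cB/k}(-n)$ is nonzero, so Leray identifies $H^q(\bbP,\wt\Omega^p_{\cB/k}(-n))$ with $H^{q-r}(X,[\Omega^{d+\bar p}_{\cB/k}]_n^*\otimes\Omega^d_{X/k})$; since $X$ is smooth and proper over $k$, Serre duality makes its $k$-dimension equal to $\dim H^{d+r-q}(X,[\Omega^{d+r+1-p}_{\cB/k}]_n)$. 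The vanishing $\Omega^j_{\bbP/k}=0$ for $j>d+r$ now allows the recursion to be iterated upward in $p$; after $d+r-p+1$ steps it terminates and, upon re-indexing the summation by $i=j-p-1$, produces the formula in (2).

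The main obstacle will be the bookkeeping: the direction of iteration in part (2) is opposite to that in part (1), forced by the vanishing of $\Omega^j_{\bbP/k}$ at the top rather than at the bottom of the range, and the Serre-duality substitution together with the index shift $\bar p = r+1-p$ must be combined carefully so that the summand lands exactly on $[\Omega^{d+r-p-i}_{\cB/k}]_n$ as in the statement.
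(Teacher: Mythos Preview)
Your proof of (1) is exactly the paper's argument. For (2), the paper takes a shorter route: it applies Serre duality directly on $\bbP$ (which is smooth and proper of dimension $d+r$ once $X$ is) to obtain $\dim_k H^q(\bbP,\Omega^p_{\bbP/k}(-n)) = \dim_k H^{d+r-q}(\bbP,\Omega^{d+r-p}_{\bbP/k}(n))$, and then simply invokes (1) with $(q,p)$ replaced by $(d+r-q,\,d+r-p)$. Your argument instead reruns the splitting for the twist $-n$, uses Proposition \ref{absolutedifferentials}(4) together with Serre duality on $X$ to identify the middle term, and then iterates the recursion upward from the vanishing $\Omega^{d+r+1}_{\bbP/k}=0$. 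Both approaches are correct and land on the same formula; the paper's is more economical, while yours has the virtue of making the role of Proposition \ref{absolutedifferentials}(4) explicit and of not requiring duality on the total space $\bbP$.
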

\begin{proof} (1) By Corollary \ref{cor:globalsplitting}, $$H^q(\bbP, \Omega^p_{\bbP/k}(n))\oplus H^q(\bbP, \Omega^{p-1}_{\bbP/k}(n))= H^q(\bbP, \wt\Omega^p_{\cB/k}(n))$$
and $H^q(\bbP, \wt\Omega^p_{\cB/k}(n))=H^q(X, [\Omega^p_{\cB/k}]_n)$ by Proposition \ref{absolutedifferentials}. Conclusion follows.

(2)   follows from (1) and duality.

\end{proof}



\begin{thebibliography}{10}

\bibitem{BI70}
{\sc P.~Berthelot and L.~Illusie}, {\em Classes de {C}hern en cohomologie
  cristalline}, C. R. Acad. Sci. Paris S\'er. A-B 270 (1970), A1695-A1697;
  ibid., 270 (1970), pp.~A1750--A1752.

\bibitem{Bott57}
{\sc R.~Bott}, {\em Homogeneous vector bundles}, Ann. of Math. (2), 66 (1957),
  pp.~203--248.

\bibitem{Bourbaki}
{\sc N.~Bourbaki}, {\em Alg\`ebre, Chapitre 10}, Masson, Paris, 1900.

\bibitem{De73-SGA7}
{\sc P.~Deligne}, {\em Cohomologie des intersections compl\`etes}, Lecture
  Notes in Mathematics, Vol. 340, Springer-Verlag, Berlin-New York, 1973.
\newblock Groupes de monodromie en g\'eom\'etrie alg\'ebrique. {II}.
  S{\'e}minaire de G{\'e}om{\'e}trie Alg{\'e}brique du Bois-Marie 1967--1969
  (SGA 7 II), Dirig{\'e} par P. Deligne et N. Katz.

\bibitem{GPV07}
{\sc J.~M. Giral and F.~Planas-Vilanova}, {\em A note on the acyclicity of the
  {K}oszul complex of a module}, Ark. Mat., 45 (2007), pp.~273--278.

\bibitem{Gros85}
{\sc M.~Gros}, {\em Classes de {C}hern et classes de cycles en cohomologie de
  {H}odge-{W}itt logarithmique}, M\'em. Soc. Math. France (N.S.),  (1985),
  p.~87.

\bibitem{EGAII}
{\sc A.~Grothendieck}, {\em \'{E}l\'ements de g\'eom\'etrie alg\'ebrique. {II}.
  \'{E}tude globale \'el\'ementaire de quelques classes de morphismes}, Inst.
  Hautes \'Etudes Sci. Publ. Math.,  (1961), p.~222.

\bibitem{EGAIII-1}
\leavevmode\vrule height 2pt depth -1.6pt width 23pt, {\em \'{E}l\'ements de
  g\'eom\'etrie alg\'ebrique. {III}. \'{E}tude cohomologique des faisceaux
  coh\'erents. {I}}, Inst. Hautes \'Etudes Sci. Publ. Math.,  (1961), p.~167.

\bibitem{Mic64}
{\sc A.~Micali}, {\em Sur les alg\`ebres universelles}, Ann. Inst. Fourier
  (Grenoble), 14 (1964), pp.~33--87.

\bibitem{Quillen67}
{\sc D.~Quillen}, {\em Homology of commutative rings}, MIT mimeographed notes,
  Massachusetts Institute of Technology, Cambridge, MA, (1967).

\bibitem{SS00}
{\sc F.~Sancho~de Salas}, {\em Residues of a {P}faff system relative to an
  invariant subscheme}, Trans. Amer. Math. Soc., 352 (2000), pp.~4019--4035.

\bibitem{Ver74}
{\sc J.~L. Verdier}, {\em Le th\'eor\`eme de {L}e {P}otier}, in Diff\'erents
  aspects de la positivit\'e ({S}\'em. {G}\'eom. {A}nal., \'{E}cole {N}orm.
  {S}up., {P}aris, 1972--1973), Soc. Math. France, Paris, 1974, pp.~68--78.
  Ast\'erisque, No. 17.

\end{thebibliography}
\end{document}